\newtheorem{proposition}{Proposition}
\newtheorem{remark}{Remark}
\newcommand\labmarg[1]{
\label{#1}\mbox{} %\mbox{} is a trick to make it work in subsection environment
%\mn{#1}  %comment this line to get rid of labels on the margins
}
\title{{\tt Splinets 1.5.0} -- Periodic Splinets}
\author[1]{Hiba Nassar}
\author[2]{Krzysztof Podg\'orski}
\affil[1]{Department of Applied Mathematics and Computer Science, Technical University of Denmark, Denmark}
\affil[2]{Department of Statistics, Lund University, Sweden}
\date{}
\begin{document}
% !TeX root = RJwrapper.tex
%\title{{\tt Splinets 1.5.0} -- Periodic Splinets}
%\author{by Hiba Nassar and Krzysztof Podg\'{o}rski}
%\date{}
\maketitle

%%%%%%%%%%%%%%%%%%%%%%%%%
%%%
\abstract{Periodic splines are a special kind of splines that are defined over a set of knots over a circle and are adequate for solving interpolation problems related to closed curves. 
This paper presents a method of implementing the objects representing such splines and describes how an efficient orthogonal basis can be obtained. The proposed orthonormalized basis is called a periodic splinet in the reference to earlier work where analogous concepts and tools have been introduced for splines on an interval. 
Based on this methodology, the periodic splines and splinets are added to the earlier version of the R package {\tt Splinets}. Moreover, the developed computational tools have been applied to functionally analyze a standard example of functional circular data of wind directions and speeds.}
%-----------------------------------------------------------------------------------

%INTRODUCTION
%-----------------------------------------------------------------------------------
\section{Introduction}
\labmarg{sec:intro}
%%%%%%%
Spline functions, or splines, for shortness, are piecewise polynomials that are smooth up to the largest order of the polynomial less one, \cite{Deboor}, and constitute a good tool for modeling  contours and surfaces in many areas of application such as signal and image processing, computer vision and computer graphics \cite{dierckx, biswas}.
The $B$-splines constitute a natural functional base for the splines of a given order and have been widely used due to their flexibility and efficiency.  They  have interesting properties such as  positivity, compact and local support, differentiable up to a certain level (depending on the spline order), and convenient computational relations. 
The $B$-splines are constructed as polynomial pieces having the same degree and connected smoothly at  points $\xi_{0}<\xi_{1}<\dots <\xi_{n}< \xi_{n+1}$, referred to as knots.  They can be effectively evaluated in a recursive way for any degree  by means of the Cox-de Boor  formula, which is presented here under a convenient assumption that the splines of the given order have the derivative up to this order (exclusive) equal to zero at the endpoints $\xi_0$ and $\xi_{n+1}$.
Namely, given a knot sequence $\xi_{0}<\xi_{1}<\dots <\xi_{n}< \xi_{n+1}$, the B-splines of order $0$ are defined by indicator functions 
\begin{equation}
\label{eq:indi}
B_{\ell,0}=\mathbb I_{(\xi_{\ell},\xi_{\ell+1}]}, ~~~\ell=0,\dots n.
\end{equation}
The following recursion relation leads to the definition of the splines of arbitrary order $k \le n$
%%%
\begin{equation}
\label{eq:recspline}
B_{\ell,k}(x)
=
 \frac{ x- {\xi_{\ell}}
  }{
  {\xi_{\ell+k}}-{\xi_{\ell}} 
  } 
 B_{\ell,k-1}(x)+ \frac{{\xi_{\ell+1+k}}-x}{
 {\xi_{\ell+1+k}}-{\xi_{\ell+1}} 
 } 
 B_{\ell+1,k-1}(x).
\end{equation}
%%%%%%
In this convention at the boundaries, we note that the $k$-order $B$-splines span an $n-k+1$-dimensional functional space of the splines with the same boundary conditions at $
\xi_0$ and $\xi_{n+1}$.

The splinets constitute an efficient orthonormalization of the $B$-splines, preserving some of the favorable properties of the $B$-spline basis, namely  locality and computational efficiency, \cite{splinets}. 
Locality that is exhibited through small size of the total support of a splinet and computational efficiency that follows from a small number of orthogonalization procedures needed to be performed on the B-splines to achieve orthogonality.

Periodic splines, \cite{graham, spath}, refer to the splines that are defined over a set of knots  with the last knot coinciding with the first one, i.e. $\xi_{0}= \xi_{n+1}$ as illustrated in Figure~\ref{circle}. 
\begin{figure}[t!]
  \centering
\includegraphics[width=0.45\textwidth]{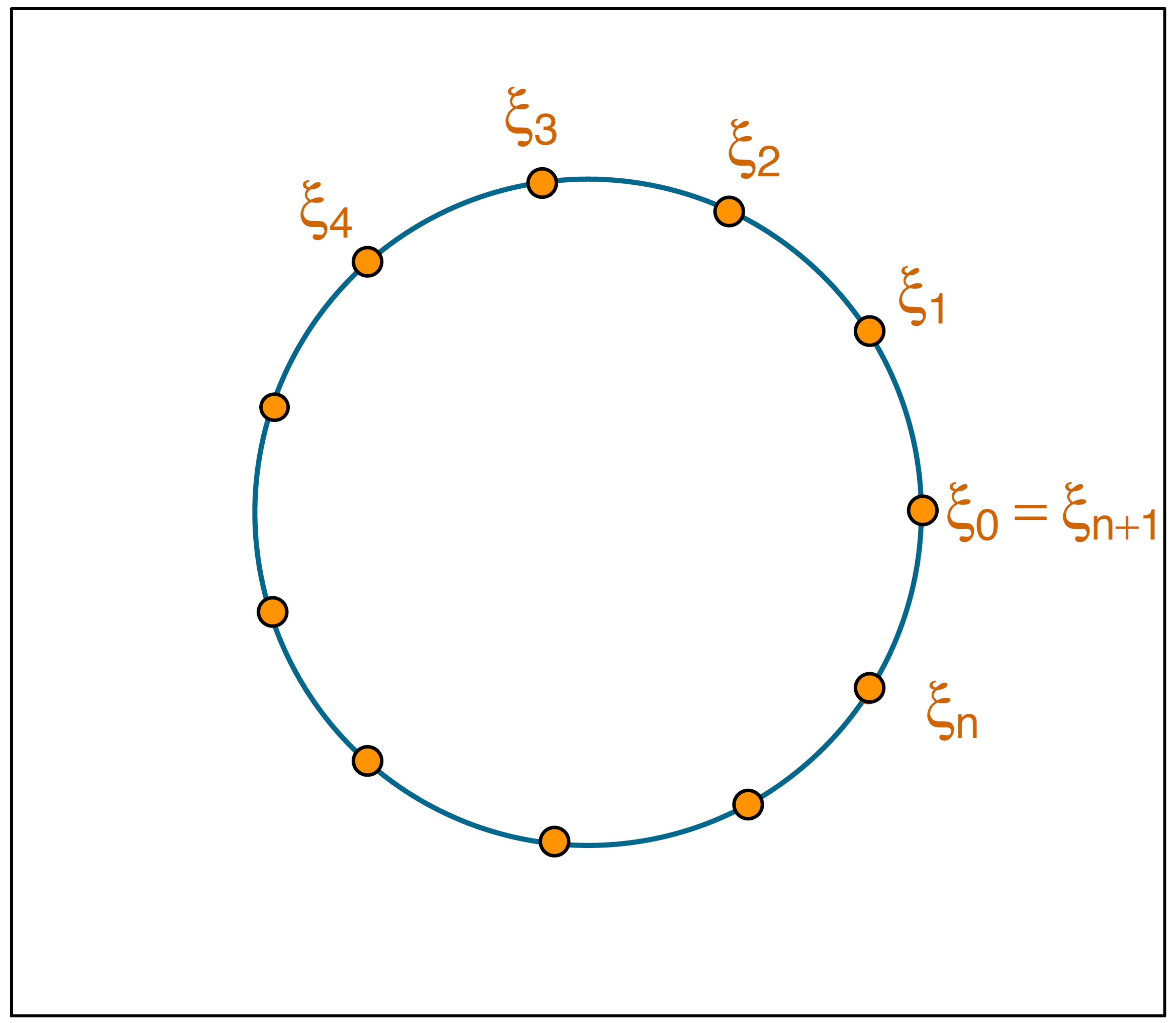}
  \caption{\small Knots wrapping for constructing closed loop.   }
  \label{circle}
  \end{figure}
A {\em regular} spline is built on a set of knots that spread over an open interval (has two endpoints), while a periodic spline is built on a set of knots that forms a closed loop. 
There are many advantages of using a closed loop. 
By the way the periodic splines are constructed, they are periodic and  preserve curvature continuity along the entire curve. 
Moreover, periodic splines offer a good visualization using polar coordinates: the radius $r$ and the azimuth $\theta$ with respect to some pole. 
A common application of them would be data that are naturally driven by the polar coordinates but they can be also a non-frequency based alternative to trigonometric bases in modeling stationary data.

The organization of the material in this note is as follows. We start with a brief account of splinet bases and its construction. 
Section~\ref{sec:periodic} presents the proposed periodic splines and splinets. Section~\ref{graphical} provides an explanation of the graphical implementation in the R-package {\tt splinets}. Then Section~\ref{eff} presents the efficiency of the periodic splinets in representing circular data.  The paper ends with Section \ref{wind}, where an application of the periodic splines to represent real wind data is presented.

%%%%%%%%%%
\section{Basics on the splinets}
\labmarg{sec:basic}
One deficiency of the $B$-splines is that they are not orthogonal. 
The lack of orthogonality makes it harder to represent the data in the $B$-spline basis. 
It requires solving a linear equation for the coefficients of the representation. Although algorithms have been developed for this purpose, it is more efficient to first orthogonalize $B$-spline and then find the representation of a function using the orthogonal basis.
The efficiency comes from the fact that for an orthogonal basis, it is enough to find the inner products between the data and the elements of the basis and no additional equation needs to be solved. 
In the literature, there are three different orthogonalization methods for the $B$-spline. 
We mention the first two briefly here, for more details see \cite{mason1993orthogonal}.
while for the third method that will be used in this paper, the general idea is explained while further details and results can be found in \cite{splinets}   :
\begin{itemize}
\item One-sided orthogonalization.  This orthogonalization can be the simplest described as the Gram-Schmidt (GS) orthogonalization, when applied to the special base formed from the $B$-splines.
Some computational gain can be obtained due to the locality (and thus partial orthogonality) of the $B$-splines.
\item Two-sided orthogonalization -a symmetrized GS method.  This orthogonalization can be represented by applying the one-sided orthogonalization from the two ends of the interval and then properly modified for a few central $B$-splines leads to a two-sided method.
Due to the locality of the $B$-splines this further improves the efficiency. 
\item The splinets is defined as the orthogonal basis obtained by a structured orthogonalization used in the dyadic algorithm.
It further exploits the locality of the $B$-splines that led to efficiencies in the other two orthogonalization methods.
It is performed through grouping the $B$-splines in a dyadic pyramid and then recursively applying orthogonalization starting from the bottom of the pyramid.
Consequently, this orthogonal basis of splines is better visualized as a dyadic net of orthogonalized functions rather than a sequence of them and hence the name `splinet'. 
%This net is obtained using the support levels. The smallest support range is at the bottom level and is made of neighboring pairs of individual intervals, i.e. we pair the first one with the second, the third with the fourth and so on until the second last is paired with the last.
For a detailed explanation of the dyadic algorithm and the splinets we refer to \cite{splinets}. 
A visualization of the pyramid like structure can be seen in Figure~\ref{periodic_splinet}, where the same algorithm is applied to the periodic $B$-splines.  
The splinet method is preferred over the previous two methods since it is computationally more efficient and it preserves the locality properties similar to those featured by the $B$-splines.
\end{itemize}

%%%%%%
\section{Periodic splines and splinets}
\labmarg{sec:periodic}
In this section, we will explain how to construct periodic $B$-splines and periodic splinets using regular $B$-splines and splinets, respectively. 
A general periodic spline is then obtained as a linear combination of these two functional bases, i.e. the periodic splines constitute a linear space that is spanned by the periodic $B$-splines or, equivalently, by the periodic splinet.  
\subsection{Periodic $B$-splines}
Assume the following set of knots $\xi_{0}<\xi_{1}<\dots <\xi_{n}< \xi_{n+1}$. The set of $B$-splines with these fixed knots consists of  $n + 1- k$ regular splines of order $k$. 
In periodic splines, the set of the knots spread over a closed circle with $\xi_0 = \xi_{n+1}$ as in Figure~\ref{circle}. Moreover, the length of the arc between any two knots preserves the normalized distance of the corresponding knots on the interval.
The method of constructing periodic splines using regular $B$-splines is based on an extension of knots. The set of knots needs to be extended by $k$ knots, $\xi'_{1}, \xi'_{2}, ...\xi'_{k}$, located before the initial endpoint that preserves the distances between the last $k$ knots.  Similarly, a number of $k$ knots, $\xi'_{k+1}, \xi'_{k+2}, ...\xi'_{2k}$, are added after the terminal endpoint that preserves the distances between the first $k$ knots. Those knots are called extra knots. Hence, the extended set of the knots is  
\begin{equation}
\label{eq:extension}
\xi'_{1}<\xi'_{2}< \dots < \xi'_{k}<\xi_{0}<\xi_{1}<\dots <\xi_{n}< \xi_{n+1}<\xi'_{k+1}< \xi'_{k+2}< \dots <\xi'_{2k},
\end{equation}
where the distance between the extra knots satisfies the following conditions
\begin{equation}
\label{eq:ext2}
    \begin{aligned}
     &\xi'_{k+1}  - \xi_{n+1}=\xi_{1}-\xi_{0},   &&\xi_{0}-\xi'_{k}=\xi_{n+1}-\xi_{n},\\
     &\xi'_{k+i}-\xi'_{k+i-1}= \xi_{i}- \xi_{i-1}, &&\xi'_{i+1}-\xi'_{i}= \xi_{n-k+i}- \xi_{n-k+i-1}.
    \end{aligned}
\end{equation}

Due to the previous restriction the first $2k$ intervals and the last $2k$ intervals generated in between extended knots have the same length of segments respectively, i.e. the first segment from the first $2k$ intervals equal to the first segment in the last $2k$ intervals, and the second equal to the second, etc. Hence, the set of $k$ $B$-splines ($k$-tuplet) defined over the first $2k$ intervals are identical to the last $k$-tuplet of $B$-splines defined over the last $2k$ intervals  (see Figure~\ref{extra_knots}). 
This duplication is interpreted as the periodicity (the $B$-splines can be now extended periodically over the entire line by repeating the ordered sequence of splines).
Alternatively, if one wraps the interval along a circle then the first $k$-tuple will overlap the last one and thus they can be identified as a one $k$-tuple.

\begin{figure}[t!]
  \centering
\includegraphics[width=0.9\textwidth,height=0.5\textwidth]{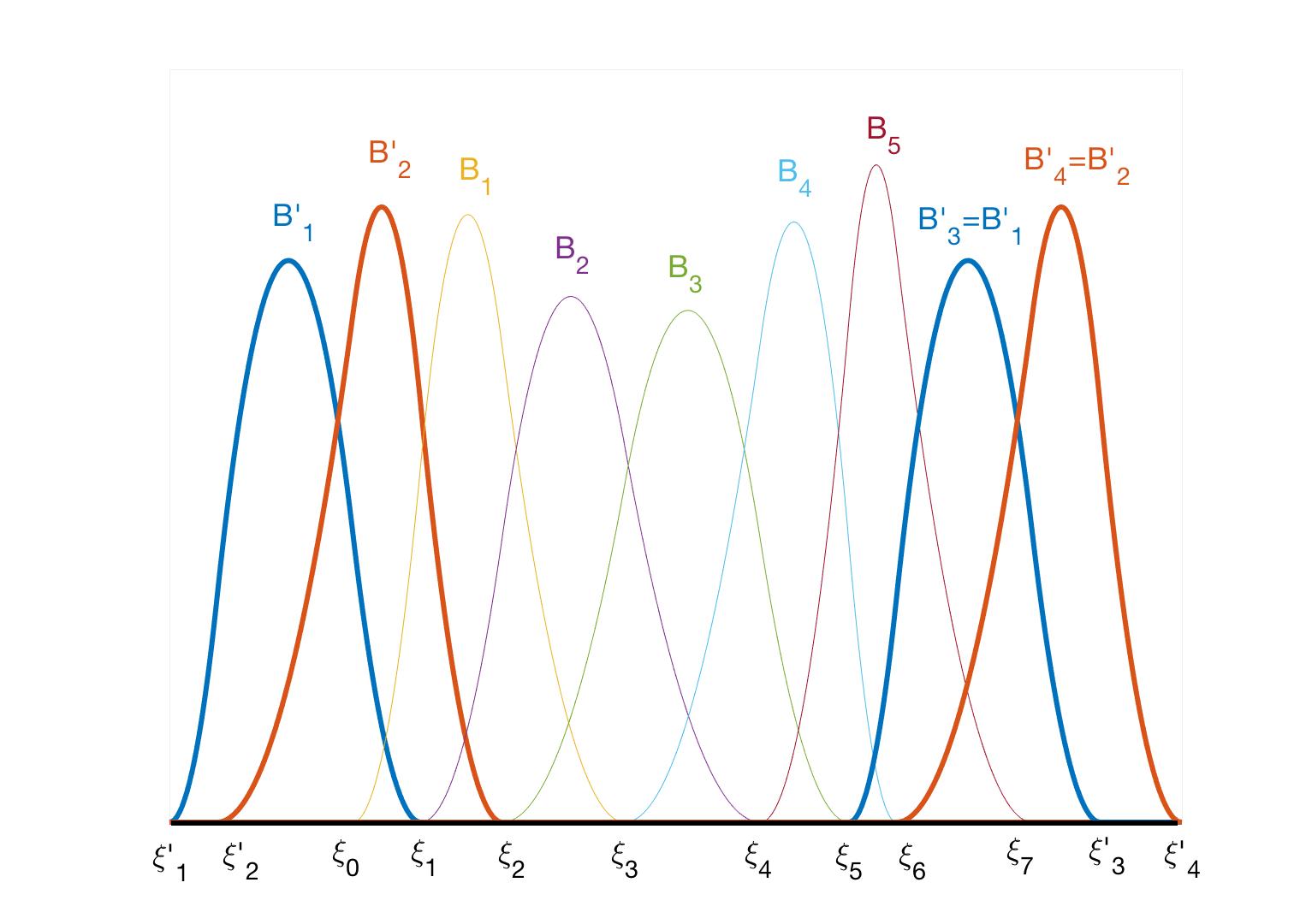}
  \caption{\small Adding extra knots at endpoints preserving distance, and the resulting pairs identical $B$-splines of order 2 at the beginning and at the end. These two pairs illustrate the periodicity of the $B$-splines and on the circle they coincide.}
  \label{extra_knots}
  \end{figure}

The periodic $B$-splines consist of the regular splines that are built over the set of knots $\xi_{0}, \dots,  \xi_{n+1}$, and an extra $k$-tuplet of $B$-splines, $B'_1, B'_2, \dots, B'_k$, to complete the set of $B$-splines. 
To summarize
%%%%%%%%%%%%
\begin{itemize}
\item The $n+2$ knots are extended to $n+2k+2$ knots through \eqref{eq:extension} and \eqref{eq:ext2}.
\item Over this extended set of knots the regular $B$-splines are built. There will be $n+k+1$ of them.
\item The first and the last $k$-tuple have identical shape and are considered to be the same on the circle. They represent the periodicity of the $B$-splines since other splines can be repeated periodically on both sides of the real line. Each spline replicated in this way is considered as one periodic spline with the period $T=\xi_{n+1}-\xi_0$. 
\end{itemize}
%%%%%%%%%%%%
The first and the last $k$-tuple when restricted to the original domain $[\xi_0,\xi_{n+1}]$ represent parts of a single $k$-tuple, the first or the last in the extension, cut at the $\xi_{0}=\xi_{n+1}$ on the circle.
 For example, the support of the first extra $B$-spline spreads over the $k+1$ segments  at the beginning and at the end of the interval $[\xi_0, \xi_{n+1}]$.  Consequently,  this spline has the knot $\xi_0$, or equivalently $\xi_{n+1}$, in its support.

We note the dimension of the periodic splines with the imposed periodic boundary conditions is $n + 1$ and the counts is made as follows: there are $n+ 1-k$ regular $B$-splines with the zero-boundary condition on the interval $[\xi_0, \xi_{n+1}]$ and $k$ extra splines to complete the loop.

\begin{figure}[t!]
  \centering
\includegraphics[width=0.5\textwidth,height=0.5\textwidth]{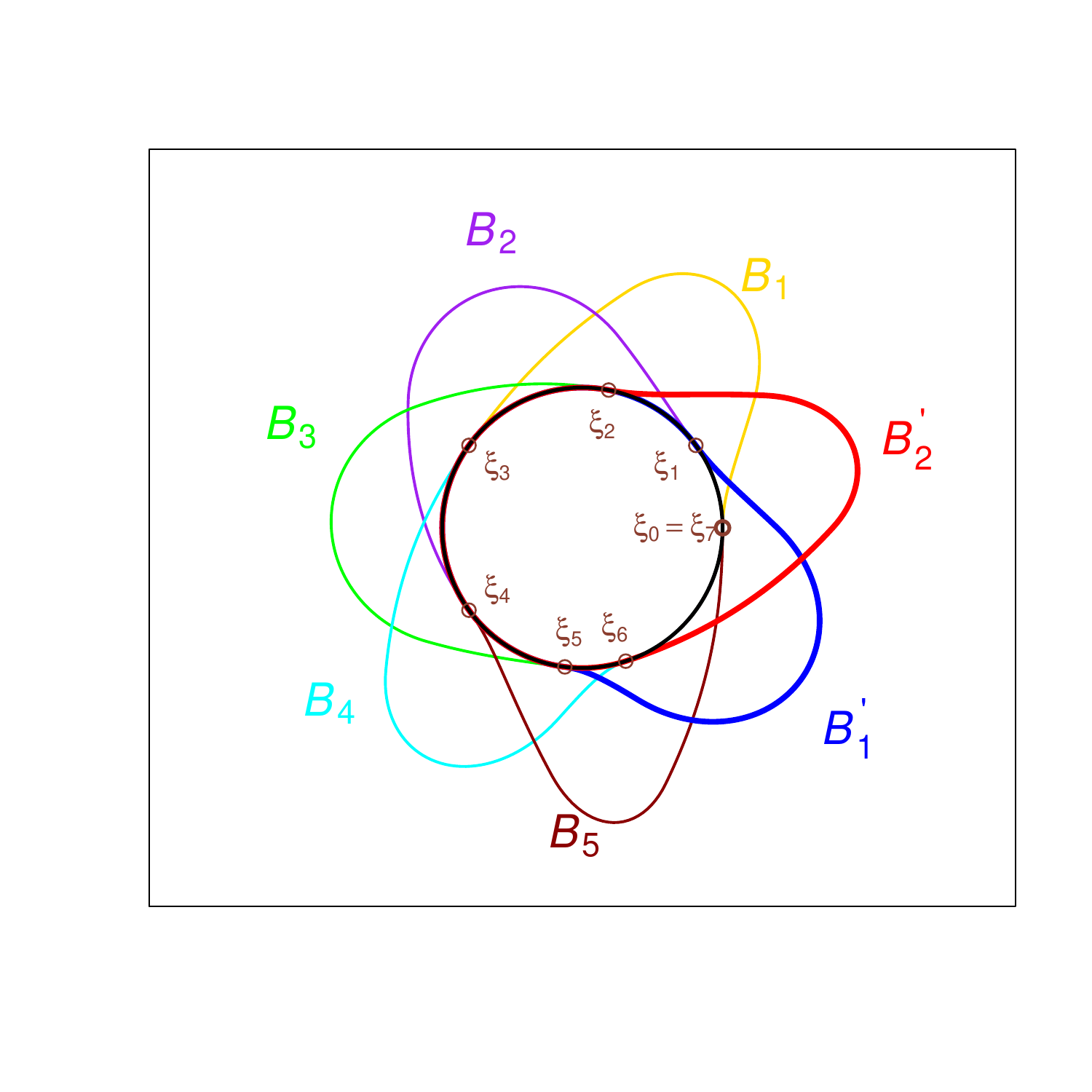}
  \caption{\small The periodic splines corresponding to those in Figure~\ref{extra_knots} in the polar coordinate graph.}
  \label{per}
  \end{figure}
Figure~\ref{per} illustrates the periodic splines that plot  in the polar coordinates, see also Section~\ref{graphical}. 
The two splines, in bold line, $B'_1,  B'_2 $ are the extra splines to complete the loop and overlapping with the pair $B_{3}',B_4'$ that is not explicitly represented in the graph.

%One need to add one $k$-tuplet to complete the set of $B$-splines. 

\begin{figure}[htbp]
  \centering
\includegraphics[width=0.45\textwidth,height=0.5\textwidth]{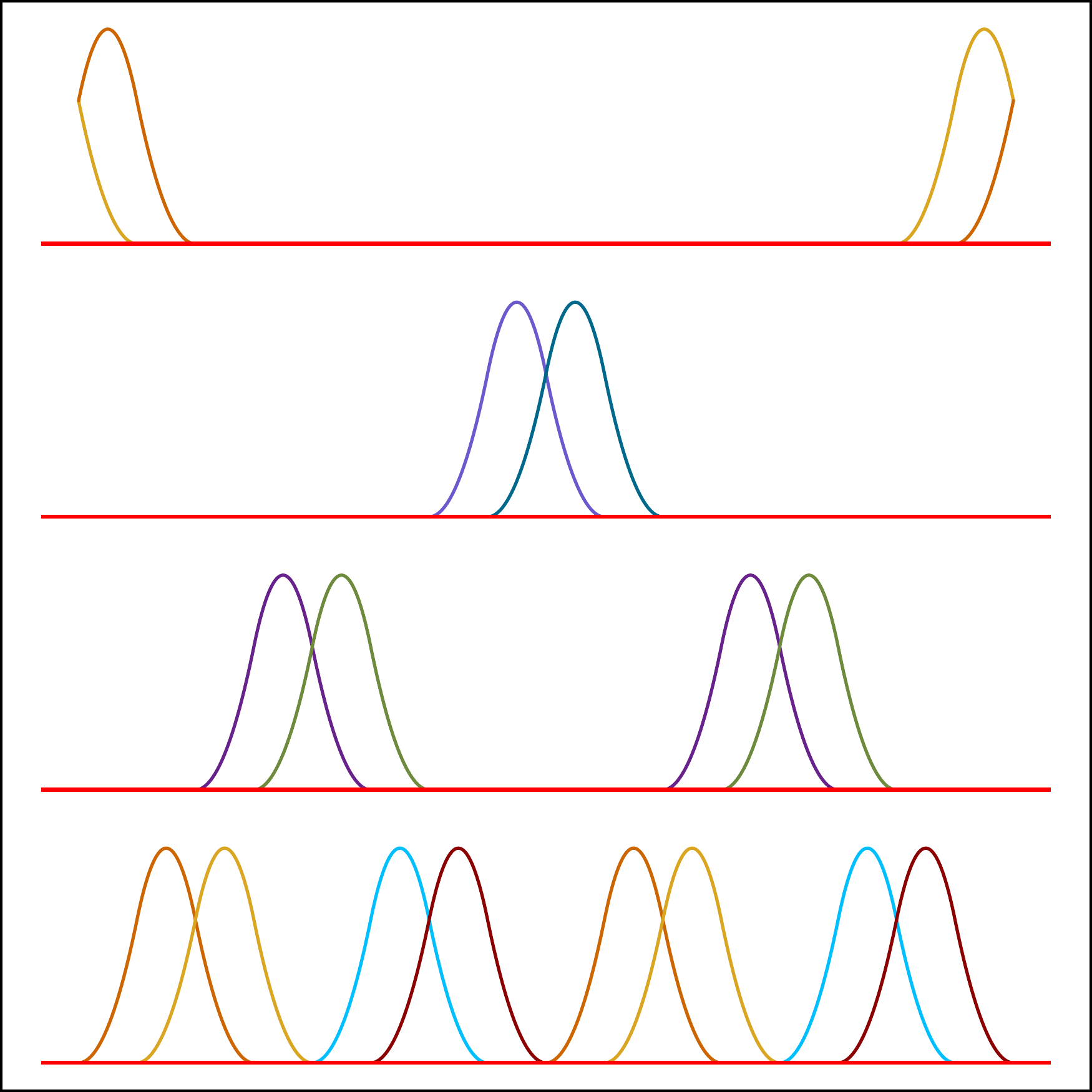}
\includegraphics[width=0.45\textwidth,height=0.5\textwidth]{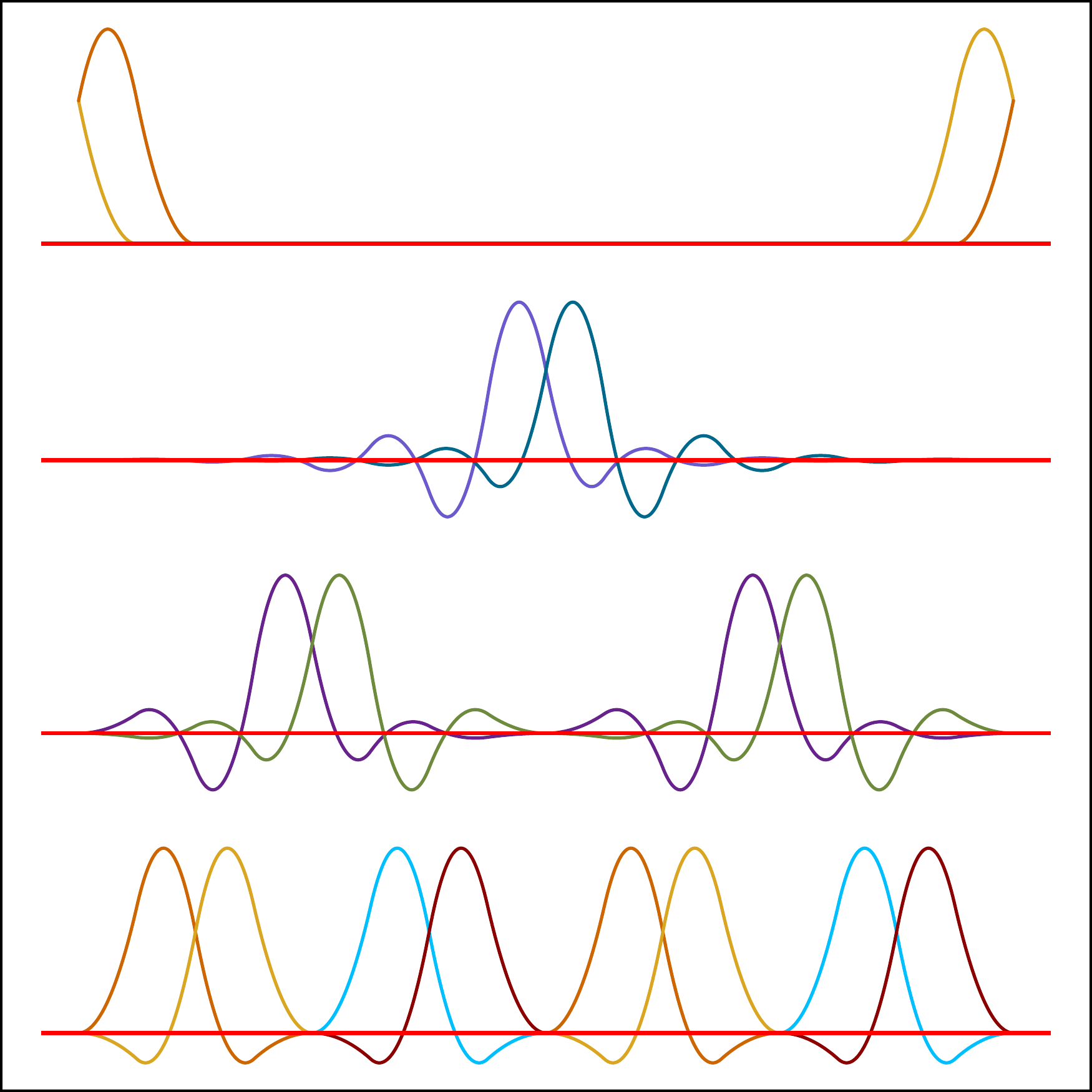}\vspace{6mm}\\
\includegraphics[width=0.45\textwidth,height=0.5\textwidth]{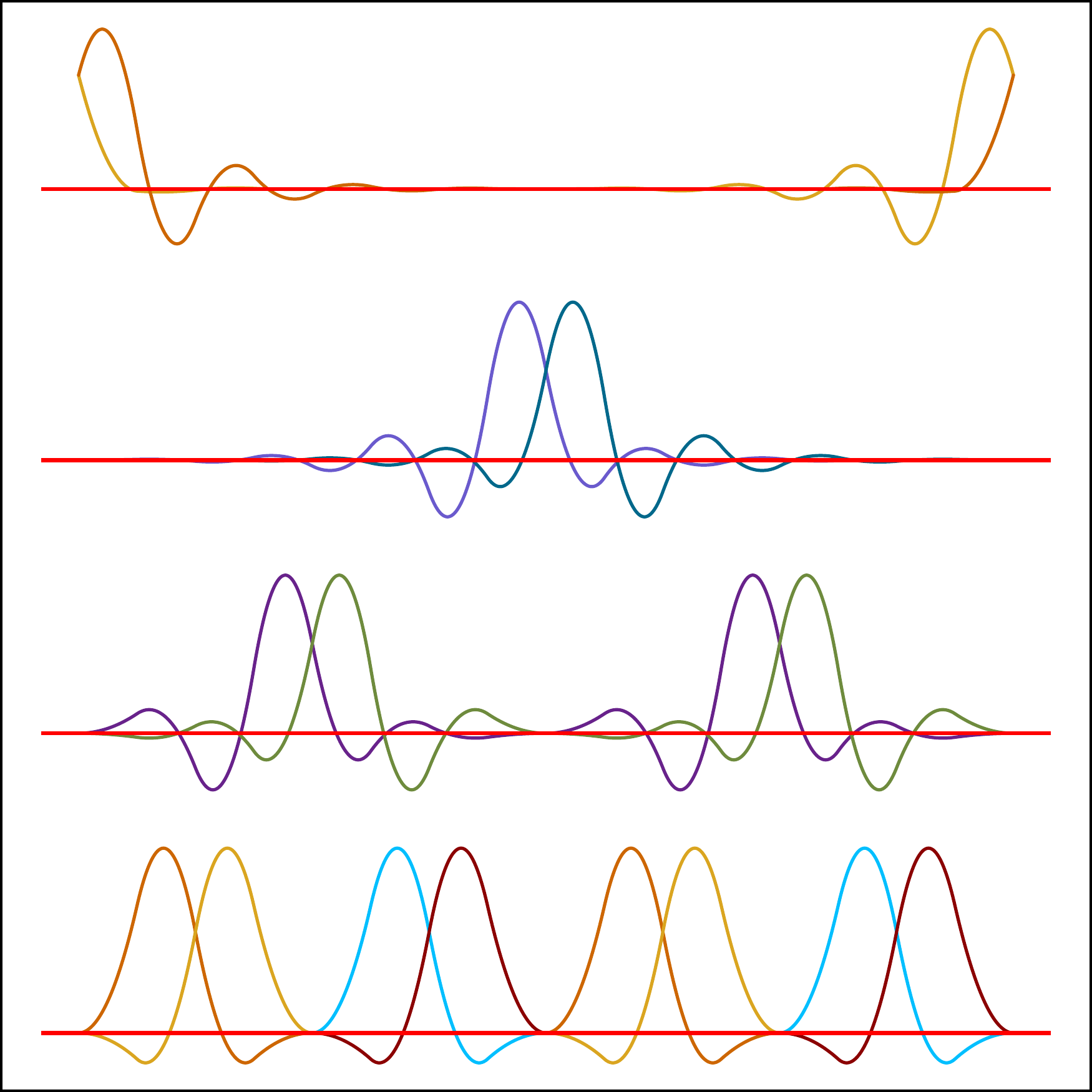}
\includegraphics[width=0.45\textwidth,height=0.5\textwidth]{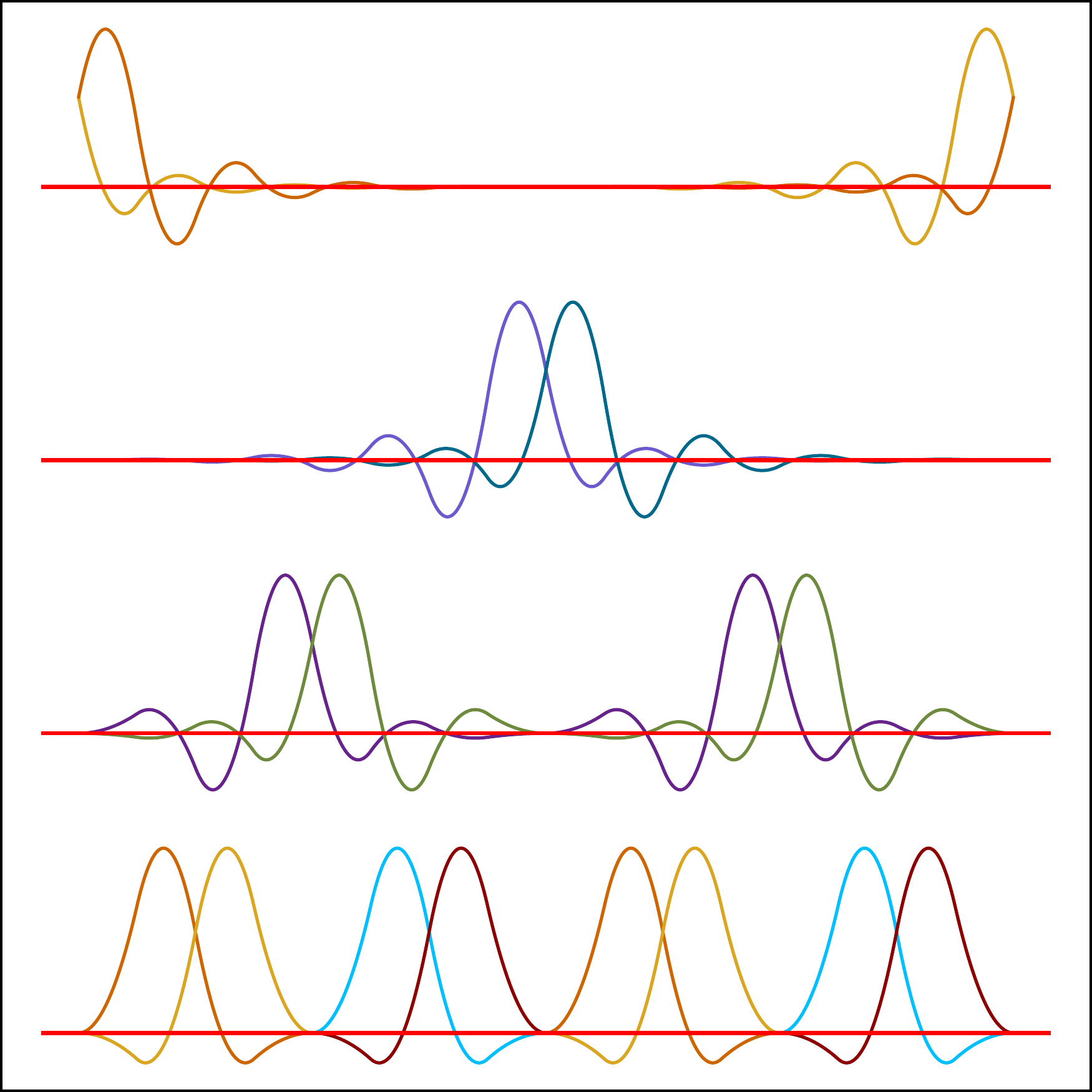}
  \caption{\small Steps in the construction of a periodic splinet.
  {\it (Top-left):} The dyadic pyramid before orthogonalization with regular $B$-splines in the first three rows and the extended $B$-splines in the top row; 
  {\it (Top-right):} The regular splinet in the three bottom rows and unorthogonalized extended $B$-splines in the top row;
  {\it (Bottom-left):} The regular splinet (bottom three rows) with the extended splines orthogonalized with respect to it in the top row;
  {\it (Bottom-right):} The final periodic splinet with the extended splines mutually orthogonalized in the top row.}
  \label{per_net}
  \end{figure}

\subsection{Periodic splinet}
 As explained above the periodic $B$-splines consist of two sets of splines:  $n-k+1 $ regular splines and $k$ extra splines which are conveniently duplicated on the extended knot representation.
 Instead of duplication and extending the domain, one can simply return to the original domain $[\xi_0,\xi_{n+1}]$ and consider the pairs of extended $B$-splines as the same spline that does not have the zero boundary condition. 
 These splines have the periodic boundary conditions, i.e. the derivatives  up to the $k$th order are the same at the endpoints.
 To keep the `net' structure,  an extra level is added on top of all the levels of the regular splinet. On the extra level, there will be the $k$-tuplet of the orthogonalized extra splines.
 Figure~\ref{per_net}~{\it (top-left)} presents the regular $B$-splines in the first three rows and the extra $k$-tuplet in the top row ($k=2$ in this example), in which we see the two splines with periodic boundary conditions (orange and yellow lines). 
The method of obtaining periodic splinets from periodic splines can be summarized in the following steps.
\begin{description}
\item[1)]  We first build the $n-k+1 $ orthonormal splines ($O$-splines)  from the regular $B$-splines, as explained in \cite{splinets} and implemented in the {\tt Splinets}-package. This group of $O$-splines is called the regular splinet, see Figure~\ref{per_net}~{\it (top-right)}.
\item[2)] To `close the loop'   the extra $k$-tuplet of splines at the top row is orthogonalized with respect to the regular splinet. Due the locality and small supports that regular splinets have, see Chapter 5 \cite{splinets}, the extra $k$ splines have common supports only with the first and the last $k$-tuples on each level. Hence, this step is achieved by orthogonalizing the extra $k$-tuplet of $B$-splines with respect to $2N$ $k$-tuplets from the regular splinet, where $N$ is the number of levels. The outcome of this step is presented in Figure~\ref{per_net}~{\it (bottom-left)}.
\item[3)] The extra two $k$-tuples of splines, while orthogonal to everything else, are not orthogonal to each other. 
This step of the construction of the splinet concludes by orthogonalizing the extra $k$ splines at the top with respect to each other. 
For this, we use the symmetric Gram-Schmidt orthogonalization, which is directly implemented in the {\tt Splinets}-package. We note that the periodic condition for the extended splines (all derivatives up to the $k$th order are the same at the endpoints) is preserved due to the zero boundary (and thus also periodic) conditions of the regular splinet. 
These orthogonalized extra splines will have full support and thus are on the maximal range together with the $k$-tuplet on the second last level.
The final outcome, the periodic splinet, is presented in Figure~\ref{per_net}~{\it (bottom-right)}
\end{description}

\subsection{Periodic splines}
By setting the set of knots and using the extension of these knots as described in the previous section, not only periodic bases such as the $B$-splines and the splinet, but also any periodic spline can be expressed as a regular spline over the extended knots. 
The method of representing the regular splines through efficient Taylor expansions around the knots has been discussed in detail in \cite{Podgorski} and was implemented in the original version of {\tt Splinets 1.0.0}.

Alternatively, once the periodic bases have been determined they can represent any periodic splines by the linear combination of the basis elements, which is another efficient representation of the periodic splines as vectors in $n+1$ dimensional Euclidean space. 

%%%%%%
\section{Graphical visualization}
\labmarg{graphical}

%%%%%%%%%%%%%%%%%%% 
  \begin{figure}[t!]
  \centering
\includegraphics[width=0.4\textwidth]{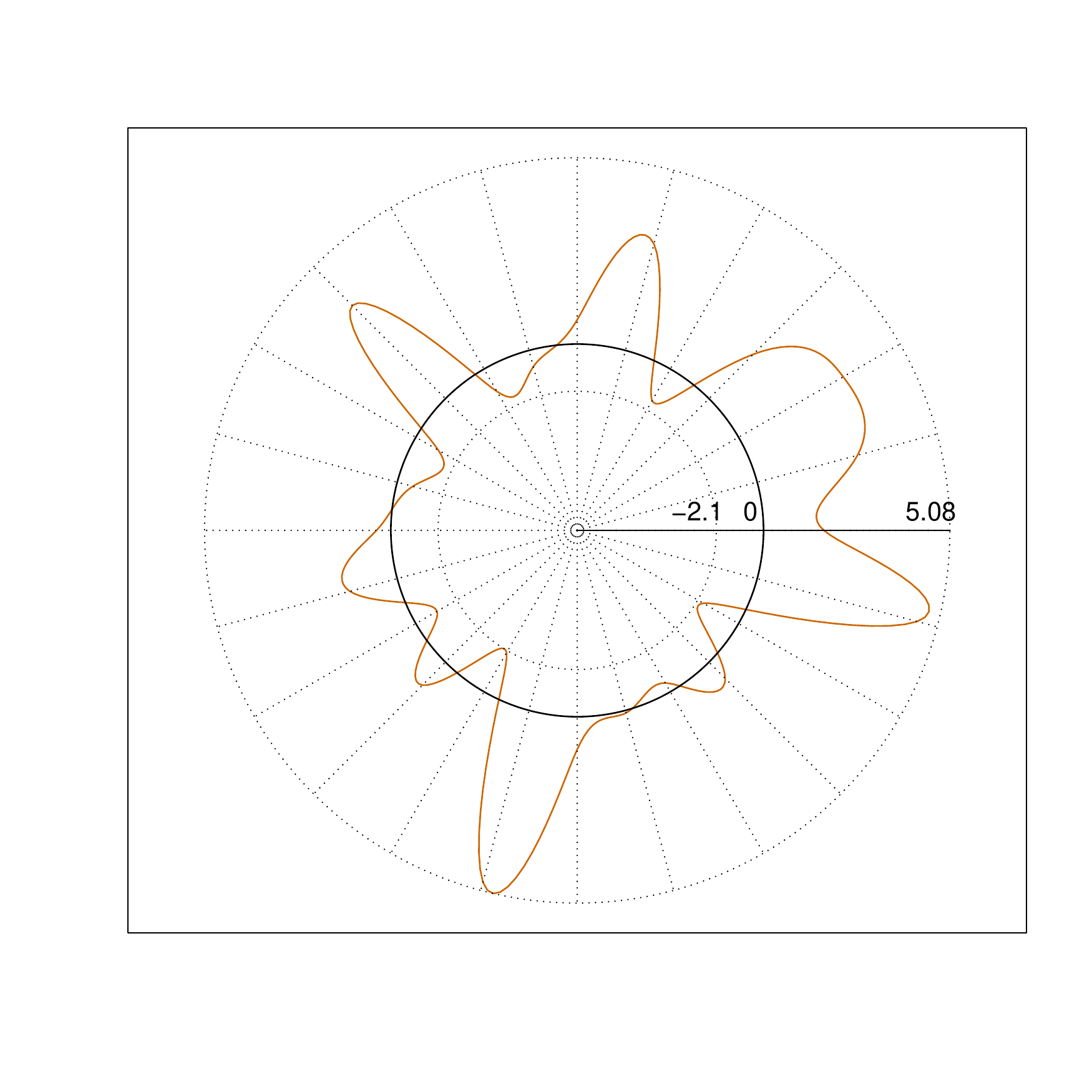}
\includegraphics[width=0.4\textwidth]{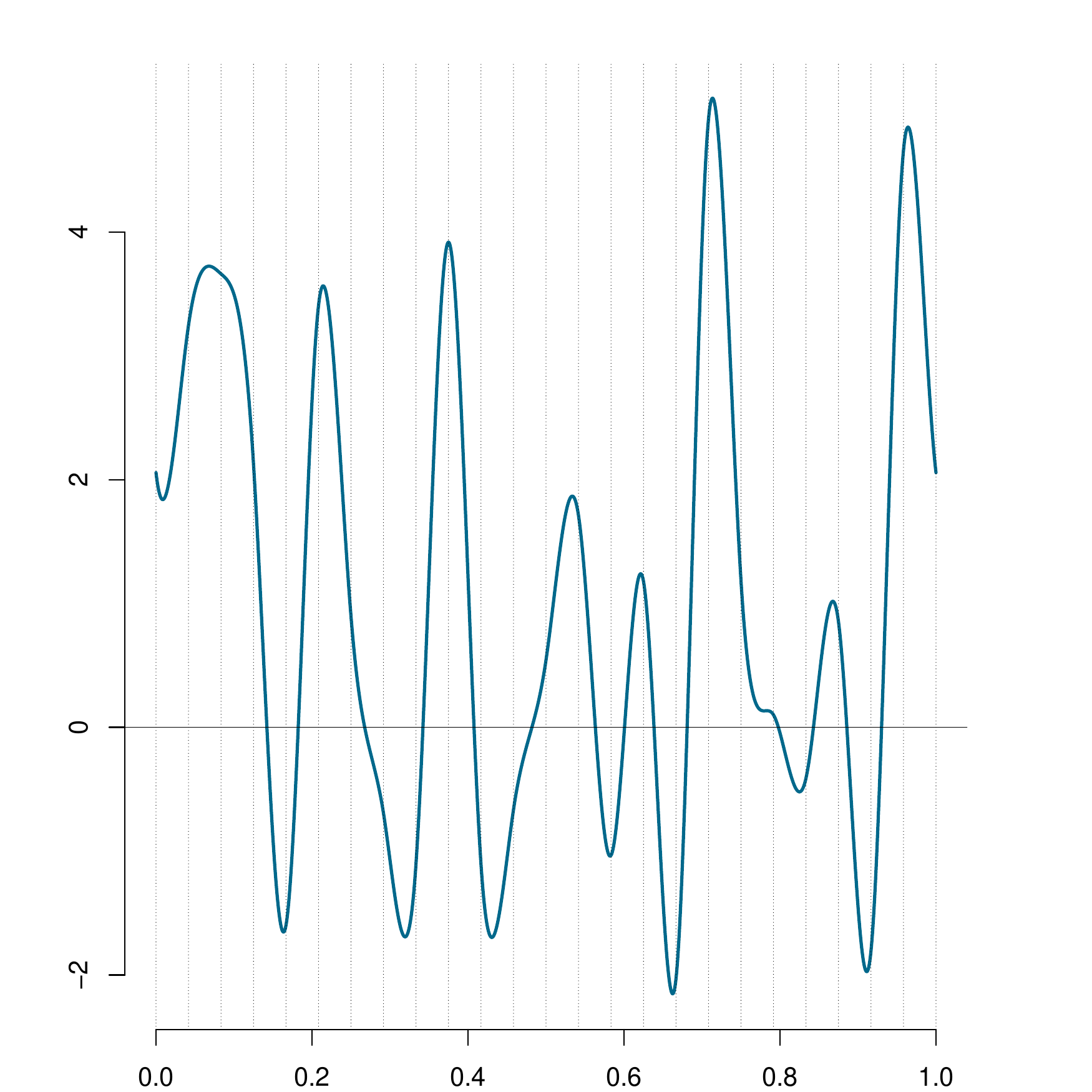}
\\
\includegraphics[width=0.4\textwidth]{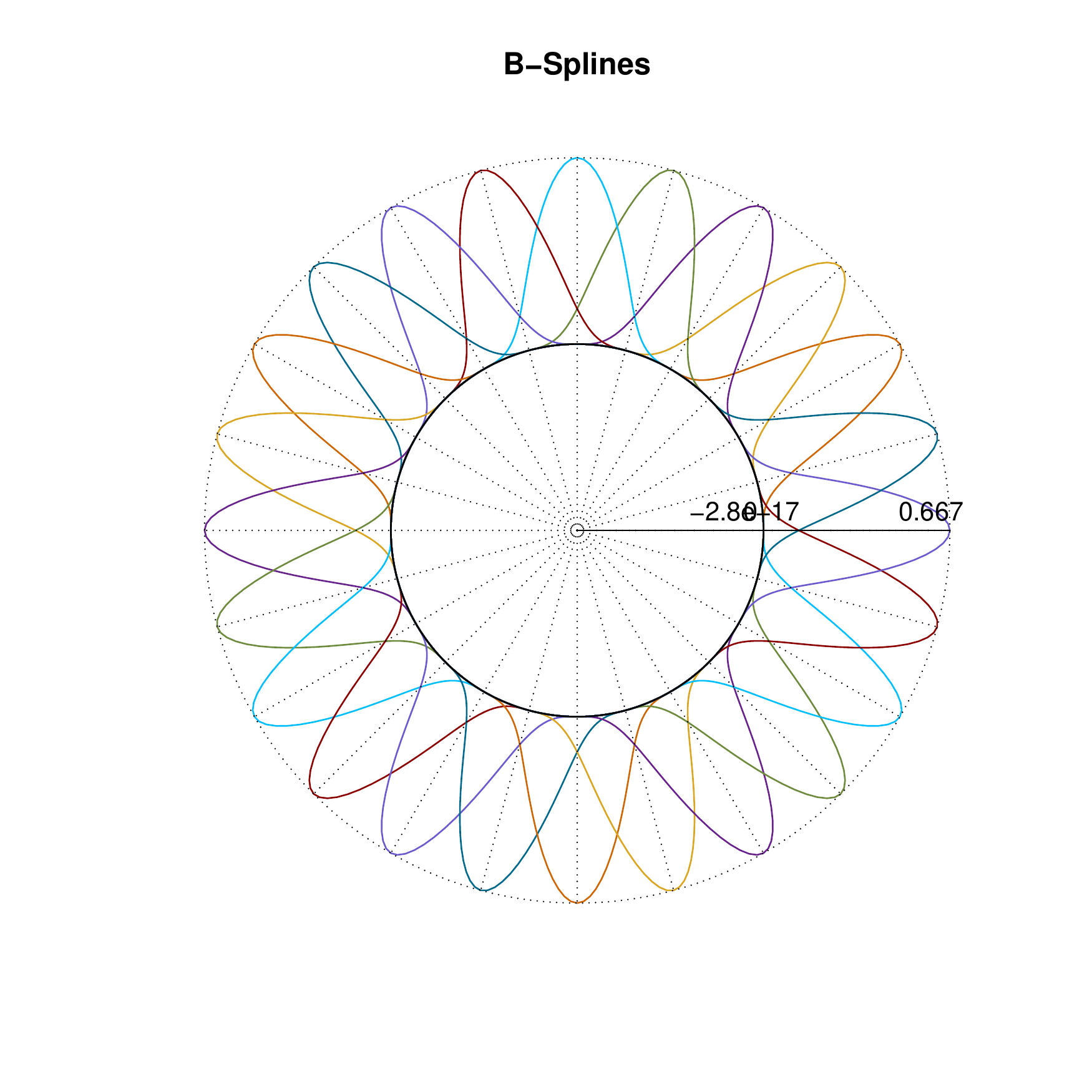}
\includegraphics[width=0.4\textwidth]{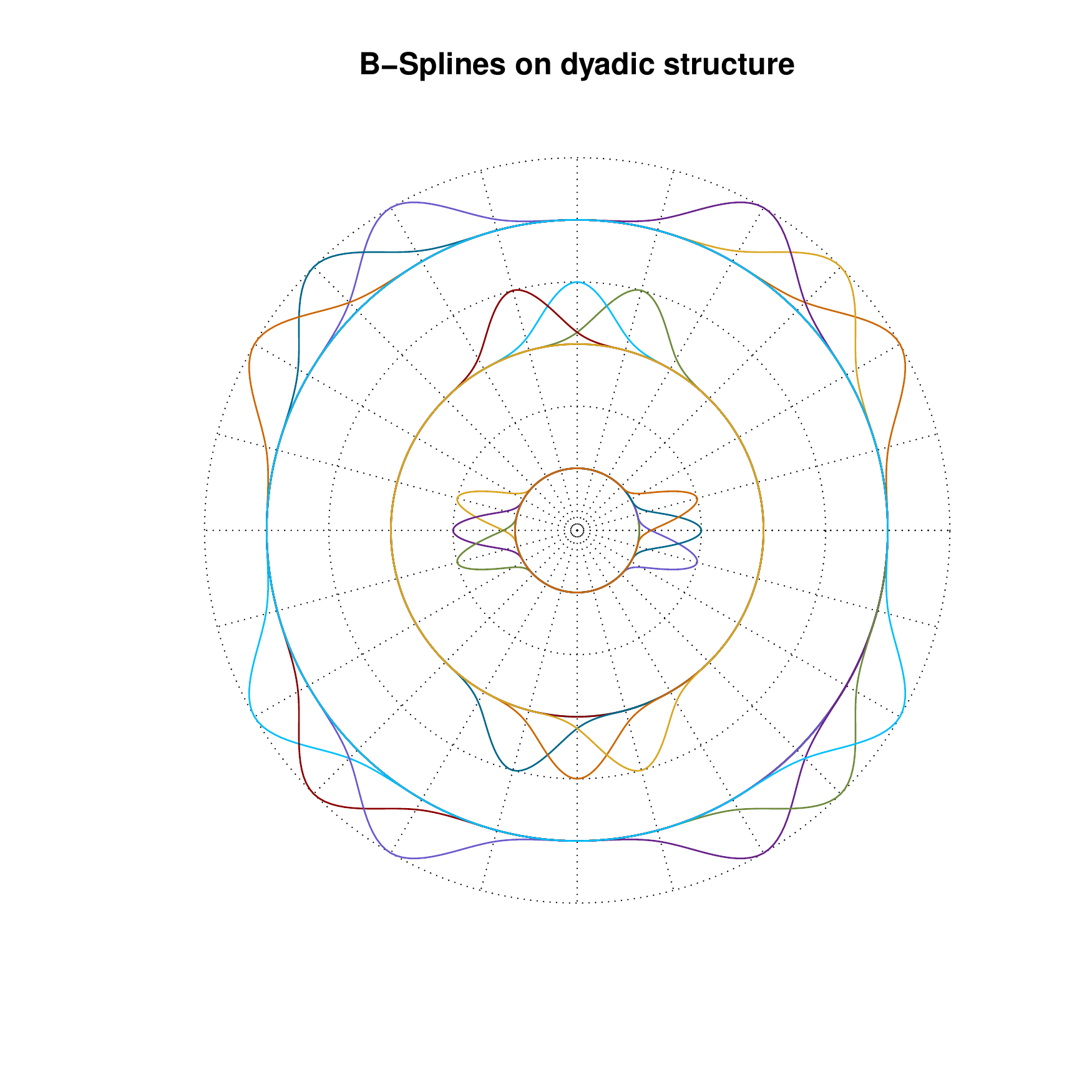}
  \caption{\small A periodic function in the polar coordinate representation {\it (top-left)} and in the cartesian coordinate {\it (top-right)}. The dotted segments in both graphs are located at the knots. In the bottom graphs, two visualizations of the $B$-splines chosen to decompose the presented function: {\it (left)} the $B$-splines on a single plot; {\it (right)} the same $B$-splines on a dyadic structure. }
  \label{fig:periodic_function}
  \end{figure}
%%%%%%%%%%%%%%%%%%
In this section, we present a method of visualizing periodic splines, periodic splinets, and the decomposition of a periodic  function by a splinet. 
Our approach is based on the polar coordinates since they provide the  most simple and natural way to present the periodicity. 
The location of a point on the plane is determined via the coordinates $( \theta, r)\in (0,2\pi] \times [0,\infty)$, which are referred to as the azimuth ($\theta$) and the radius ($r$). 
Our goal is a graphical representation in the plane and in the polar coordinates of a collection of functions $\{B_i, i\in \mathcal I\}$, each defined on a circle. 
For this, we define a one-to-one map $T$  over a  closed rectangular region  
$$
R=\left\{ (x,y) \in \mathbb R^2 | \quad 0 < x \leq 1, \quad m\stackrel{def}{=}\min_{i, x}{B_i(x)} \leq y \leq M\stackrel{def}{=}\max_{i,x}{B_i(x)}\right\}, 
$$
transfers each point $(x,y)$ in $R$ to a corresponding  $( \theta, r)$ in the polar coordinate 
\begin{equation}
\label{transform}
(x,y) \stackrel{T}{\longrightarrow} (\theta= 2 \pi x  , r= \exp (a y)), 
\end{equation}
where $a= \log (2)/M$.

  \begin{figure}[t!]
  \centering
\includegraphics[width=0.4\textwidth]{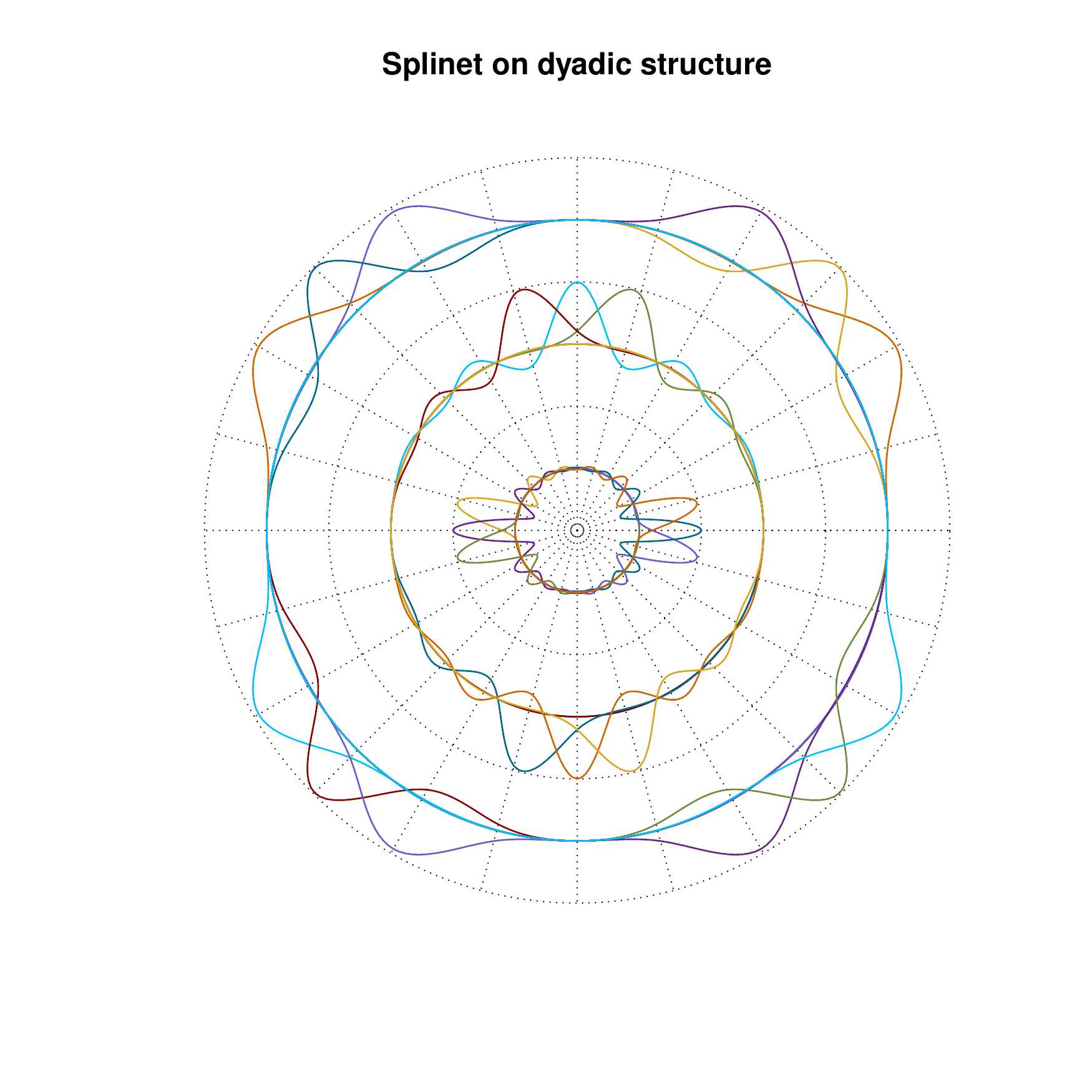}
\includegraphics[width=0.4\textwidth]{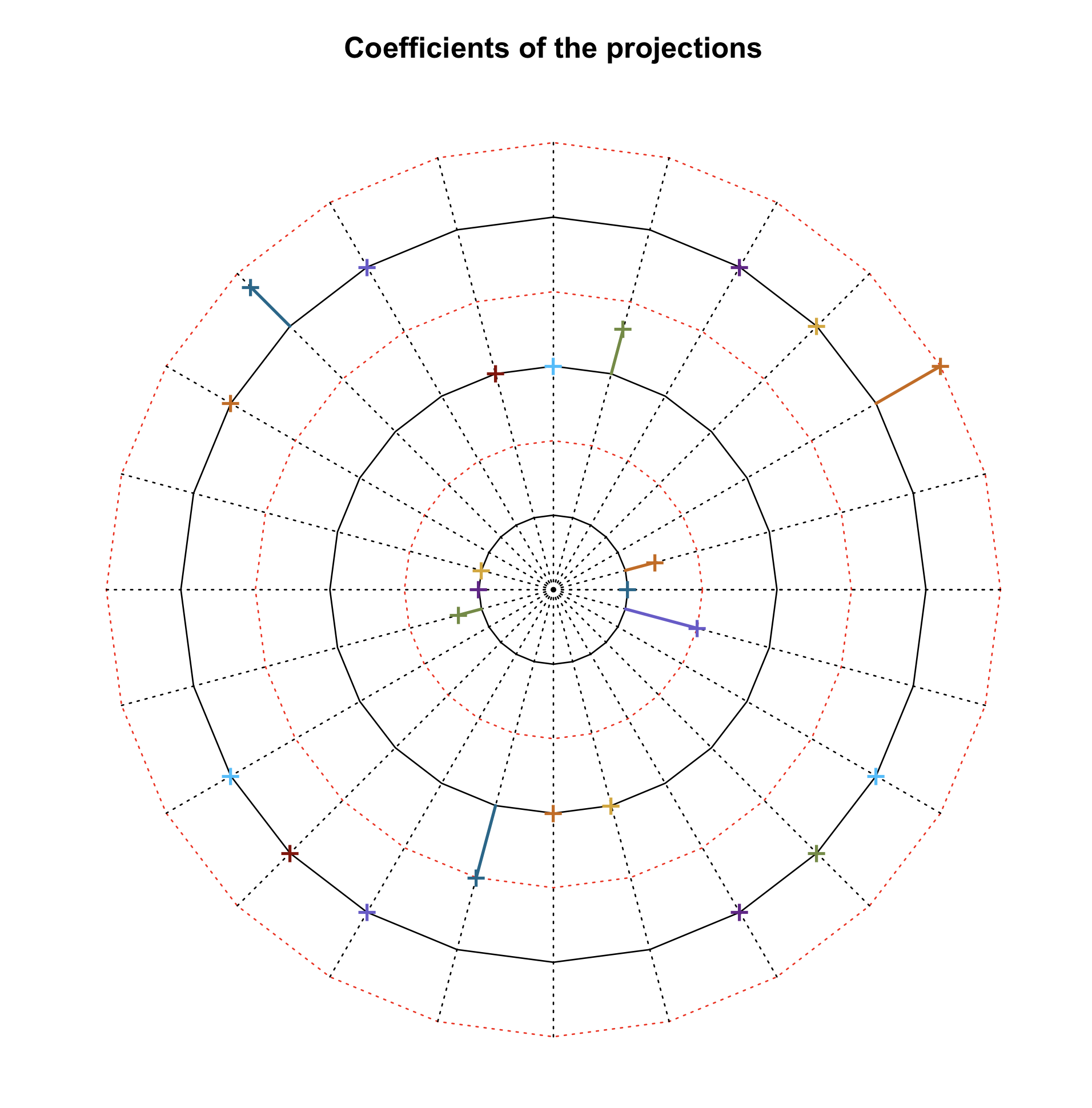}\\
\includegraphics[width=0.4\textwidth]{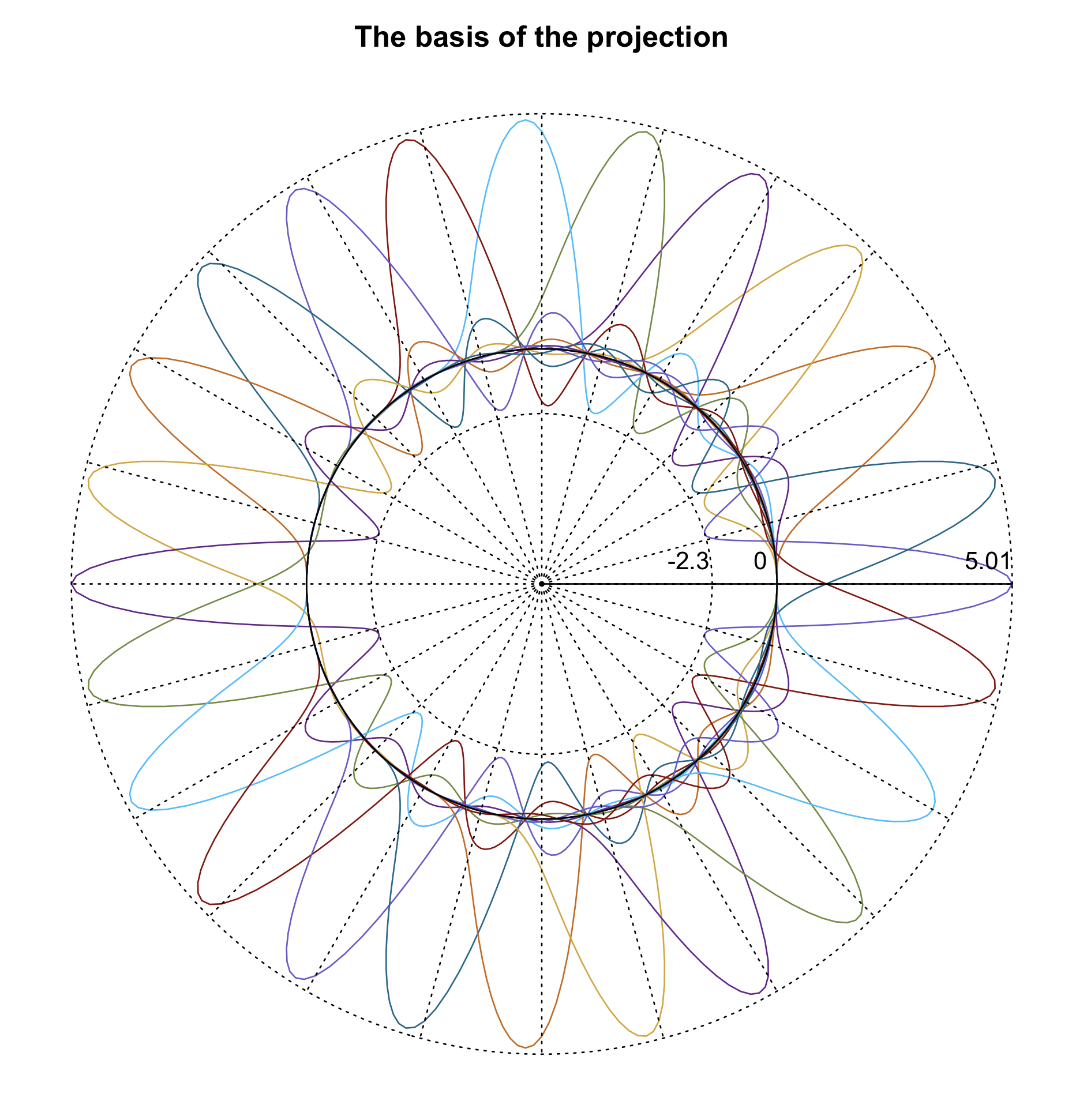}
\includegraphics[width=0.4\textwidth]{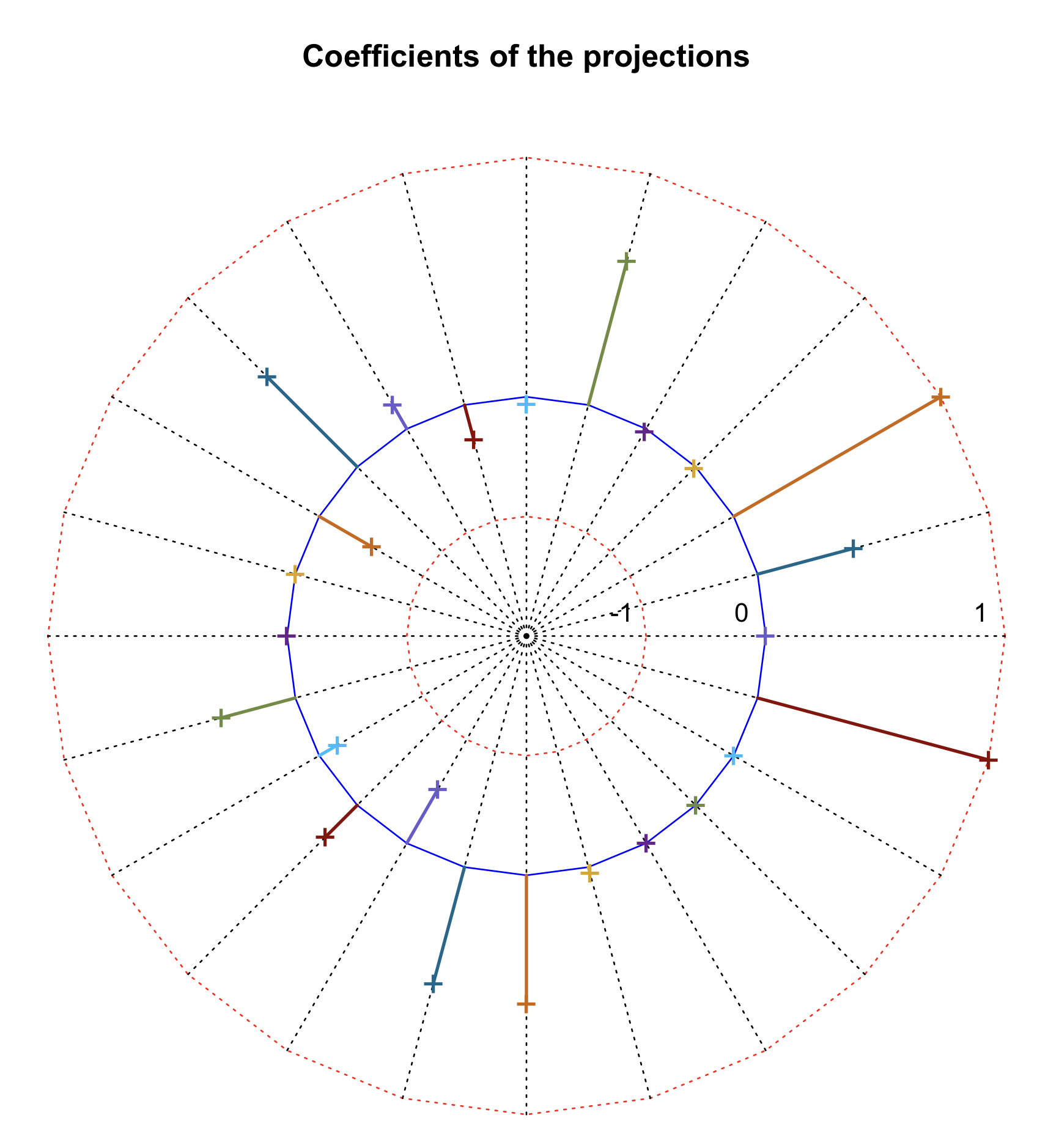}
  \caption{\small {\it (Left-Top):}  A periodic splinet  built  in the polar coordinate representation that utilizes the dyadic structure. {\it (Right-Top)}:
  The splinet decomposition of the function is presented in Figure~\ref{fig:periodic_function}. The colors of the spikes correspond to the colors of the splinet-element and crosses on the spikes mark the contributions of the splinet-elements.
  The bottom graphs are analogous to the top ones but with the dyadic base replaced by the two-sided orthogonal basis.}
  \label{periodic_splinet}
  \end{figure}

The map $T=(T_1,T_2)$ transfers the set of the knots, $\xi_{0}<\xi_{1}<\dots <\xi_{n}< \xi_{n+1}$ that are spread over an interval, to spread them over the circle $\theta_{k}=T_1(\xi_{k})$,  $k=0, 1, \dots, n$, so that the arc length between each pair of the neighboring knots is proportional to the distance between the corresponding knots on the interval. 
Moreover,  the corresponding angles for $\xi_0 $ and $\xi_{n+1}$ are   $\theta_0= 0$ and $\theta_{n+1}=2\pi$, respectively, i.e. $\xi_0 $ and $\xi_{n+1}$ coincide on the unite circle. 
One other advantage of this choice of the map $T$ is that it gives a clear visualization when the  periodic splines have positive or negative values. 
In this visualization, one can see the positive values of the periodic splines lie outside the unit circle while the negative values lie inside the unit circle ($r=\exp(a y) $ is larger than one when $y>0$ is positive and  smaller than one when $y<0$).
Moreover, the constant $a$ in the exponent guarantees that the value of $r$ ranges in the interval $ (0,2]$. 
An example of a periodic function represented in this visualization is shown in Figure~\ref{fig:periodic_function}.
Another visualization of a set of functions is presented in Figure~\ref{per} and the $B$-splines chosen to decompose the function in Figure~\ref{fig:periodic_function}~{\it (Top)} are shown in the same figure {\it(Bottom-left)}.
 
 A splinet is best visualized as a net of functions rather than a sequence of them, see Figure~\ref{per_net}.
 A periodic splinet is thus presented on concentric circles that correspond to the levels visualized for the regular splinets. 
 This method represents the periodic splinets for different levels on nested circles having the same center and different radii.
 If we assume that $N = \lceil\log (n/k) / \log 2\rceil$, where $k$ is the order of splines and $n$ is the number of knots, then there will be $N$-levels in the dyadic pyramid on which the splinet is presented.
 The lowest level in the splinet's pyramid contains the largest number of splines and thus is presented on a circle with the largest radius equal to $1+ 2N$. 
 Higher levels in splinets are presented on circles with radii decreased by $2$ from the  level above.
 Finally, the last, the $N$th layer contains only two $k$-tuplets of splines.
 Hence, we define a one-to-one map $T_{l}$ for each level $l=1,\dots,N$ in the net  to an rectangular region
as follows
$$
(x,y) \longrightarrow (\theta=  2 \pi x, r= 2(N-l) + \exp (a y)), 
$$
where $a$ is as before.
Although this visualization is designed to represent the dyadic orthogonal splines that we call the splinets other bases that are using the $B$-splines can be visualized this way as well, see Figure~\ref{fig:periodic_function}~{\it(bottom-right)}
for the visualization of the $B$-splines themselves. 
In this case, $N=3$ and the chosen $B$-splines are residing on three concentric levels.
We have the relation $n=k 2^N$, which is referred to as a fully dyadic case, where $n$ is the number of knots and also the dimension (the number of elements) of the basis.

 Such visualization is particularly convenient to visualize a projection to a linear space of splines.
In Figure~\ref{periodic_splinet} {\it(top-left)}, we show the splinet obtained by the dyadic orthogonalization of the $B$-splines. 
The $12$ elements of the splinet at the lowest level are visualized on the largest circle with the radius $2N-1=5$. 
The higher level is visualized on the second smaller circle with the radius $3$,  and the $6$ splines on the highest level are visualized on the smallest circle with the radius $1$. 

We use the same graphical scheme to present the splinet spectral decomposition of the periodic function shown in Figure~\ref{fig:periodic_function}~{\it (top)}. The dyadic polar coordinate structure is used for presenting the splinet. 
The corresponding coefficients of the projection of the function to the splinet are shown in 
Figure~\ref{periodic_splinet}~{\it (top-right)} shows the splinet's spectral decomposition of the function presented in Figure~\ref{fig:periodic_function}.
For comparison, an analogous spectral decomposition of the original signal in terms two-sided orthornomal splines is presented in the bottom graphs. 
Here, the dyadic structure is not used.
%%%%%%%%%%%%%%%%
%%%%%%%%%%%%
\section{Efficiency of the periodic splinets}
\label{eff}
The periodic splinets inherit the qualities that the splinets have, namely: locality and computational efficiency. The locality is  expressed via the small size of the total support of a periodic splinet (the sum of the sizes of the individual spline supports), that is presented in the following remark.
%%%%%%%%
\begin{remark}
\label{totsup}
The periodic splinet of order $k$ defined over a dyadic set of knots $\boldsymbol \xi=(\xi_0,\dots,\xi_{n})$ (so that the dimension of the space is equal to $n$), where $n=k 2^N$ for $N\ge 0$ and $\xi_0=\xi_n$, has the relative size of the total support independent of the location of knots and equal to 
$$
k \frac{\log (2n/k)}{\log 2}.
$$ 
\end{remark}
%%%%%%%
This remark follows directly from the fact  the total support of the regular splinets equals to, see Proposition 5 in \cite{splinets}, 
$$
k \frac{\log (n/k)}{\log 2},
$$ 
in addition to the total support of the extra splinets, after orthogonalizing them with respect to the lower levels,  (the top level as in Figure \ref{per_net}) which equals to $k$.

The locality of periodic splines leads to a lower number of the inner product evaluations involving each orthogonalization. This grants computational efficiency.  The following proposition  gives a comparison between the number of  inner products that are needed to orthogonalize periodic splines in the traditional way, Gram-Schmidt,  and in our method. 
\begin{proposition}
Consider the dyadic structure case for the periodic splines of order $k$. 
Then the one-sided orthogonalization requires evaluation of 
$$
J^1_n=2nk -3 k^2 -k
$$ 
inner products, while the corresponding number for the splinet is
$$
J^2_n=\frac{5k-1}{4} n -5k^2/2 -(3k-1)/4.
$$
\end{proposition}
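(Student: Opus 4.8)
The plan is to obtain both $J^1_n$ and $J^2_n$ by directly tallying the non-vanishing inner products that a Gram--Schmidt-type sweep actually has to evaluate, using the two structural facts behind the whole paper: an order-$k$ $B$-spline occupies $k+1$ consecutive inter-knot intervals, so $B_{\ell,k}$ and $B_{m,k}$ meet iff $|\ell-m|\le k$; and $\langle B_i,O_j\rangle$ vanishes unless the support of $B_i$ meets that of the already orthonormalized $O_j$. For a procedure processing the splines in a fixed order, the cost of the $\ell$-th spline is the number of previously produced $O$-splines whose (possibly grown) support still reaches it, and the whole count is the sum of these.

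For $J^1_n$ I would arrange the $n$ periodic $B$-splines as the $n-k$ zero-boundary $B$-splines on $[\xi_0,\xi_n]$ followed by the $k$ wrap-around $B$-splines straddling the seam $\xi_0=\xi_n$, and run ordinary Gram--Schmidt. On the interior stretch the cost of the $\ell$-th spline is $\min(\ell-1,k)$, since the running partial sum $O_m$ has support occupying the intervals $1,\dots,m+k$ and only its last $k$ predecessors can still reach $B_\ell$; summing this profile yields the bulk $k(n-k)$ minus a $\binom{k+1}{2}$-type boundary correction. Periodicity then costs a second, comparable block: each of the final $k$ $B$-splines wraps back across the seam and therefore meets essentially every $O$-spline produced so far (all of which contain the first interval), adding another $\approx k(n-k)$ together with a $\binom{k}{2}$-type correction. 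This ``a circle has two ends'' doubling is exactly why the coefficient of $n$ in $J^1_n$ is $2k$ rather than the $k$ of the interval case; collecting the exact corrections, plus the handful of near-seam splines with fewer than $k$ overlapping predecessors, gives $2nk-3k^2-k$.

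For $J^2_n$ I would split the count along the three steps of the periodic-splinet construction. Step 1, building the regular splinet on the $n-k$ zero-boundary $B$-splines by the dyadic algorithm, contributes the dominant linear-in-$n$ term; here I would simply invoke the corresponding inner-product count from \cite{splinets} (a companion of their Proposition~5 on total support), whose linearity reflects that the dyadic grouping keeps $O$-spline supports local, so the per-level cost is a geometric series that sums to a term linear in $n-k$. Step 2, orthogonalizing the $k$ extra $B$-splines against the regular splinet, adds only a finite, $k$-dependent number of inner products: each extra $B$-spline has support of just $k+1$ intervals around the seam, so by the locality of the regular splinet it can meet only the boundary $k$-tuples present on each level, and one reads off how many of those $k$ elements it actually reaches. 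Step 3, the symmetric Gram--Schmidt among the $k$ extra splines, is a fixed $\binom{k}{2}$-scale contribution. Summing the three and simplifying yields $J^2_n$.

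The main obstacle is bookkeeping accuracy rather than any conceptual step. Getting the $O(k^2)$ and $O(k)$ corrections exactly right means tracking precisely how the supports of the partially orthogonalized splines grow, above all near the seam where the wrap can make a running support jump to the whole circle; and it means reconciling the generic level count $N=\lceil\log_2((n-k)/k)\rceil$ with the clean fully-dyadic relation $n=k2^N$ assumed in the statement, so that the potential logarithmic contributions of Steps~1 and 2 collapse into the purely linear expression claimed. A secondary care point is fixing the meaning of ``one-sided orthogonalization'' as the naive circular Gram--Schmidt --- exploiting only the partial orthogonality that support overlap grants for free --- since that is the baseline against which the splinet's efficiency is being measured.
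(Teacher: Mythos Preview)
Your plan matches the paper's proof almost exactly: for $J^1_n$ the paper also splits the $n$ periodic $B$-splines into the $n-k$ zero-boundary ones (costing $nk-3k^2/2-k/2$ by the interval result) and the $k$ extra ones, each of which must be orthogonalized against \emph{all} previously produced $O$-splines; for $J^2_n$ the paper also decomposes into the same three steps, invoking Proposition~6 of \cite{splinets} for Step~1 and $\binom{k}{2}$ for Step~3.

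The one place where your account is internally inconsistent, and where the paper is explicit, is Step~2. You first write that orthogonalizing the $k$ extra splines against the regular splinet ``adds only a finite, $k$-dependent number of inner products,'' but this is not right: each extra spline overlaps the first and last $k$-tuples on \emph{every} level of the regular splinet, so Step~2 costs $2k^2$ per level times $N=\log_2(n/k)$ levels, i.e.\ $2k^2N$, which grows with $n$. You do later hedge about ``potential logarithmic contributions of Steps~1 and 2'' collapsing, and that is precisely the mechanism: the regular-splinet count from \cite{splinets} carries a term $-\tfrac{2k^2}{\log 2}\log n + 2k^2\tfrac{\log k}{\log 2}=-2k^2N$, and Step~2 contributes $+2k^2N$, so the logarithms cancel exactly and only the linear expression $J^2_n$ survives. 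Making this cancellation explicit, rather than treating Step~2 as $O(1)$, is the crux of the $J^2_n$ computation.
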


\begin{proof}
The proof is similar to and depending on that of Proposition 6, \cite{splinets}.
We consider the dyadic case for which $n=k2^{N}$ and there are  $n$ of periodic splines to be orthogonalized. 
The 'regular`  $n-k$ splines need $nk-3 k^2/2 -k/2$ inner products for one-sided orthogonalization. 
The extra $k$-tuplet splines, that are added to complete the periodic splinet, have common supports with all the  $n-k$ splinets. 
Hence, to orthogonalize the $k$ extra splines, we need $\sum _{j=0}^{k-1} (n-k+j)= nk -3k^2/2 -k/2$ inner products. Summing up gives the total number of the inner products $J^1_n$.

For a  periodic splinet, orthogonalization of the regular splines, without the extra $k$-tuplet splines, to get the splinet needs 
$$
\frac{5k-1}4 (n-1) -\frac{2k^2}{\log 2}\log n + \frac 4 k-3k^2+2k^2\frac{\log k}{\log 2}
$$
inner products, see Proposition 6, \cite{splinets}. Orthogonalization of the extra $k$-tuplet with respect to each other requires $k(k-1)/2$ inner products. Each spline from the extra $k$-tuplet has a common support only with the first $k$-tuplet and the last $k$-tuplet from each level. Hence, the number of inner products that is required for each level is $2k^2$ and there are $N $ levels. Consequently, the total number of the inner products that is required for a periodic splinet is $J^2_n$, where the stated rate follows from using the relations $N=\log (n/k)/\log2$.
\end{proof}

\begin{figure}[t!]
  \centering
\includegraphics[width=0.8\textwidth]{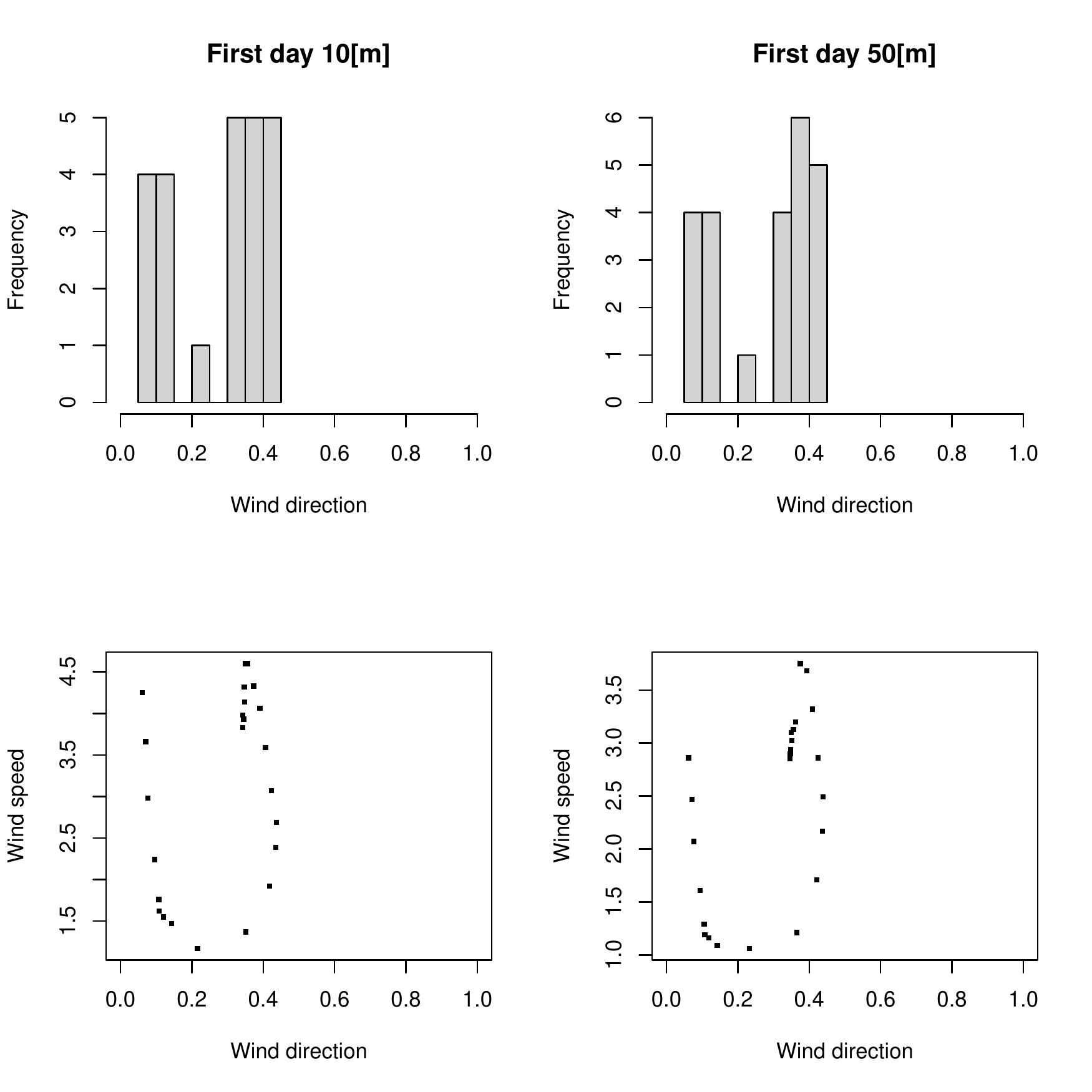}
  \caption{\small Row histrogram/scatter plot data of the first day.}
  \label{fig:raw1}
  \end{figure}

 \section{Application -- Wind dataset}
 \label{wind}
Analyzing wind direction data is important in environmental sciences, where it is used, for example, for predicting weather patterns and global climate. 
These data are circular in nature and usually represented as angles from $0^{\circ}$ to $360^{\circ}$. 
An example illustrating the convenience of our circular representation of data is made of four-variate time series that was obtained from the National Aeronautics and Space Administration (NASA) Langley Research Center (LaRC) Prediction of Worldwide Energy Resource (POWER) Project funded through the NASA Earth Science/Applied Science Program, \url{https://power.larc.nasa.gov/data-access-viewer/}. 
The four variables observed in time are: the wind directions and wind speeds measured at 10[m] and 50[m] at one point in Florida in USA from dates  01/01/2015 through 03/05/2015 at the frequency 1 per hour. The data is kept in an eight-dimensional data frame with the first quadruple corresponding to the year, month, day, and hour and the second quadruple corresponding to the wind direction and the wind speed at $10[m]$ and $50[m]$.

\begin{figure}[t!]
  \centering
\includegraphics[width=0.4\textwidth]{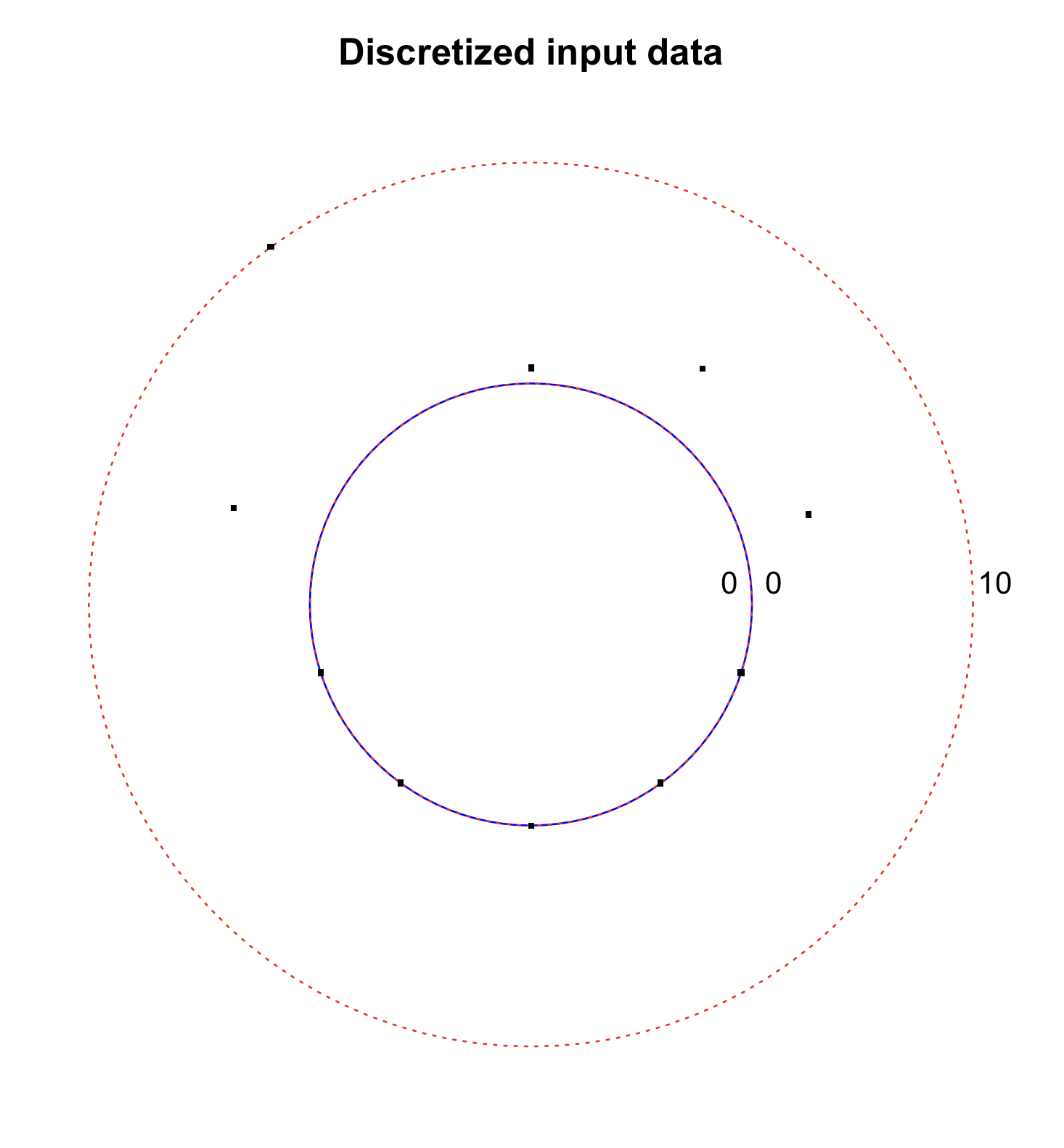}
\includegraphics[width=0.4\textwidth]{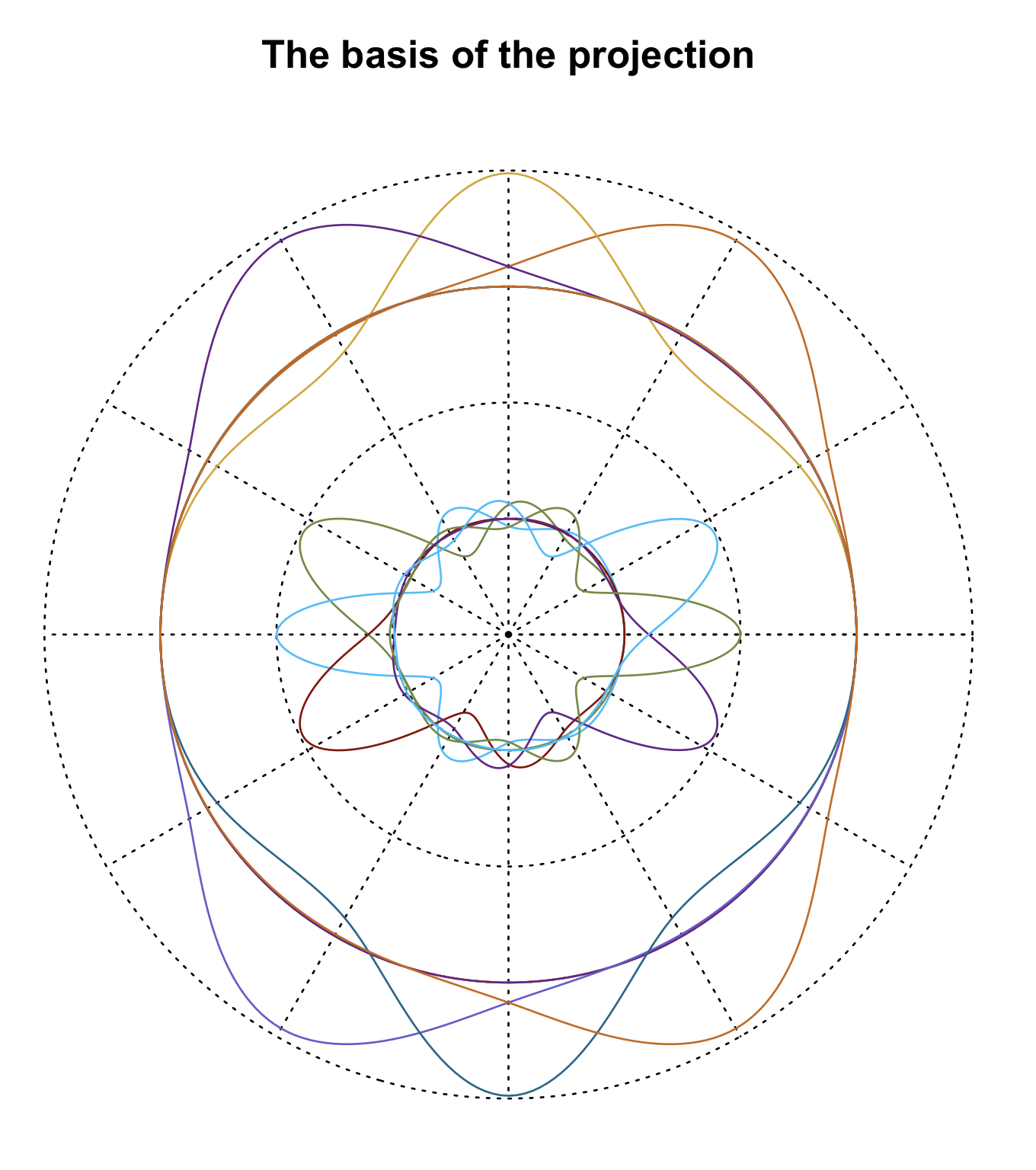}\\
\includegraphics[width=0.4\textwidth]{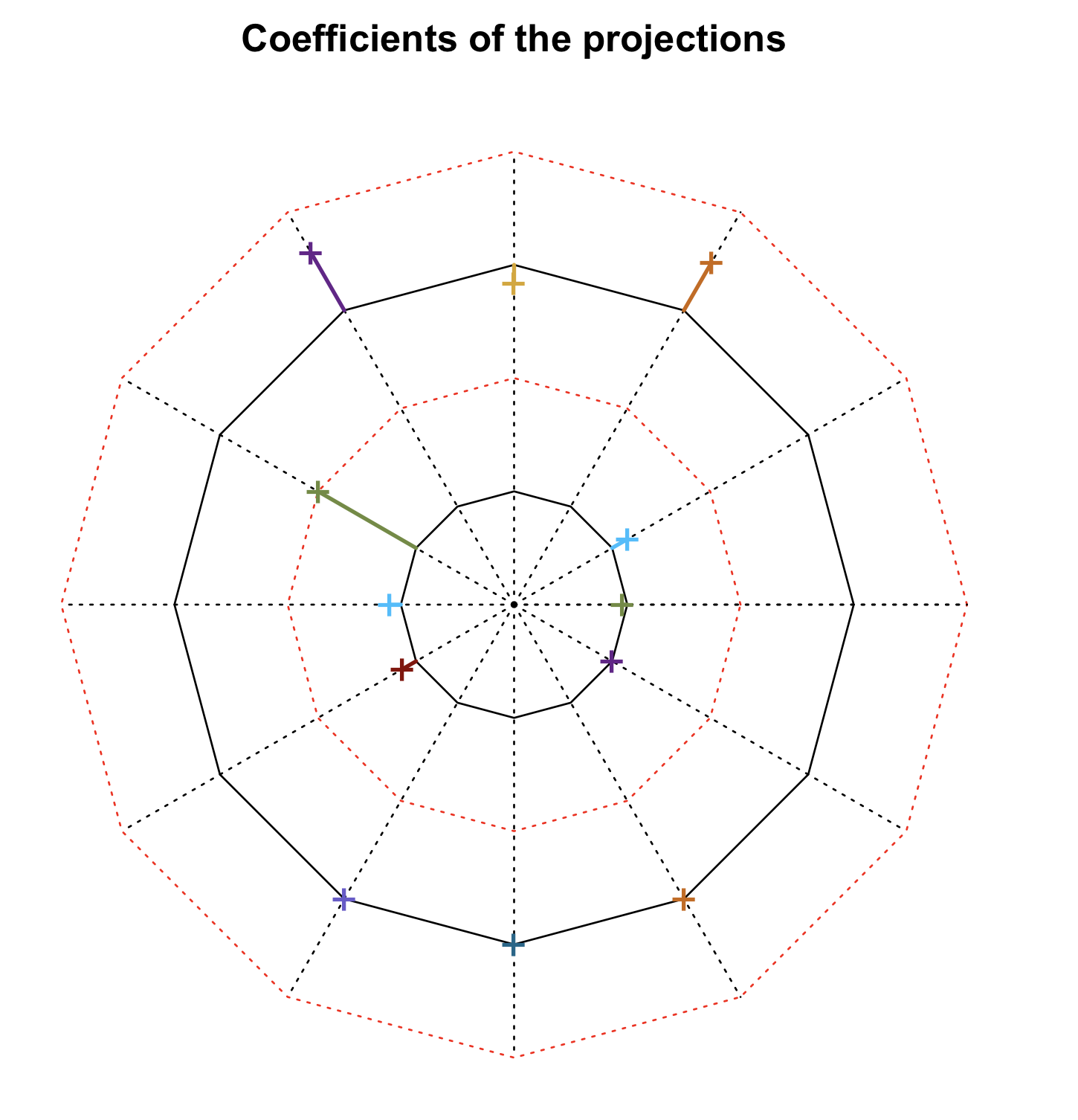}
\includegraphics[width=0.4\textwidth]{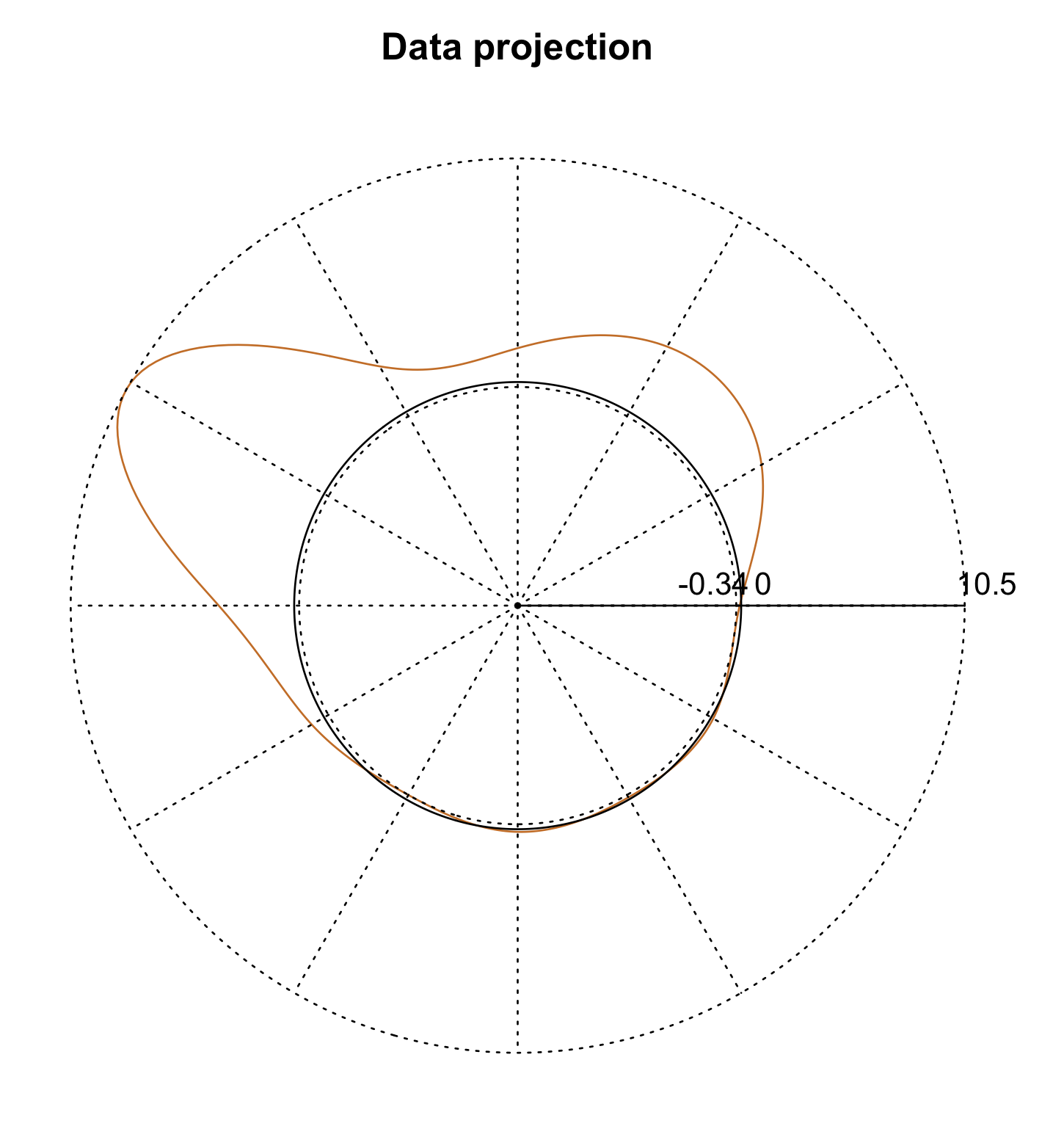}
  \caption{\small Transforming the first day 10[m] histogram to a function on a circle. {\it Left-Top:} Row histogram data of the {\it left-top} histogram in Figure~\ref{fig:raw1}; {\it Right-Top:} The splinet, of degree 3, used for the projection together with the knots marked by dashed lines; {\it Left-Bottom:} The coefficient of  the projection to the spline space; {\it Right-Bottom:}  The projection of the histogram $\hat f_1^{(10)}(\theta)$. }
  \label{fig:tran_hist}
  \end{figure}
  
One way to treat these data is to consider a four-dimensional periodic functional time series sampled daily:   $\mathbf X_i(\theta)=\left(f^{(10)}_i(\theta),v^{(10)}_i(\theta),f^{(50)}_i(\theta),v^{(50)}_i(\theta)\right)$, $\theta\in (0,360)$ where $f^{(10)}_i$, $f^{(50)}_i$ is the distribution density of the wind direction on the $i$th day at $10[m]$, $50[m]$, respectively and  $v^{(10)}_i(\theta)$, $v^{(50)}_i(\theta)$ is the value of the velocity on the $i$th day at $10[m]$, $50[m]$, respectively, given that the direction $\theta$ is observed. 
The considered data set covers 64 days, i.e.  $i=1,\dots 64$. 

Assuming this model, the daily data can be used to fit $\mathbf X_i(\theta)$ by taking an estimate of the densities $f^{(10)}$ and $f^{(50)}$ by spline smoothing of the daily histograms the directions, while the functions $v^{(10)}$ and $v^{(50)}$ can be obtained by spline fitting daily speed vs. direction. 
The raw data of the first day are seen in Figure~\ref{fig:raw1}.

The data-based functional estimates $\hat{\mathbf X}_i(\theta)=\left(\hat f^{(10)}_i(\theta),\hat v^{(10)}_i(\theta),\hat f^{(50)}_i(\theta),\hat v^{(50)}_i(\theta)\right)$ of $\mathbf X_i(\theta)$ can be obtained as follows.

 \begin{figure}[t!]
  \centering
\includegraphics[width=0.4\textwidth]{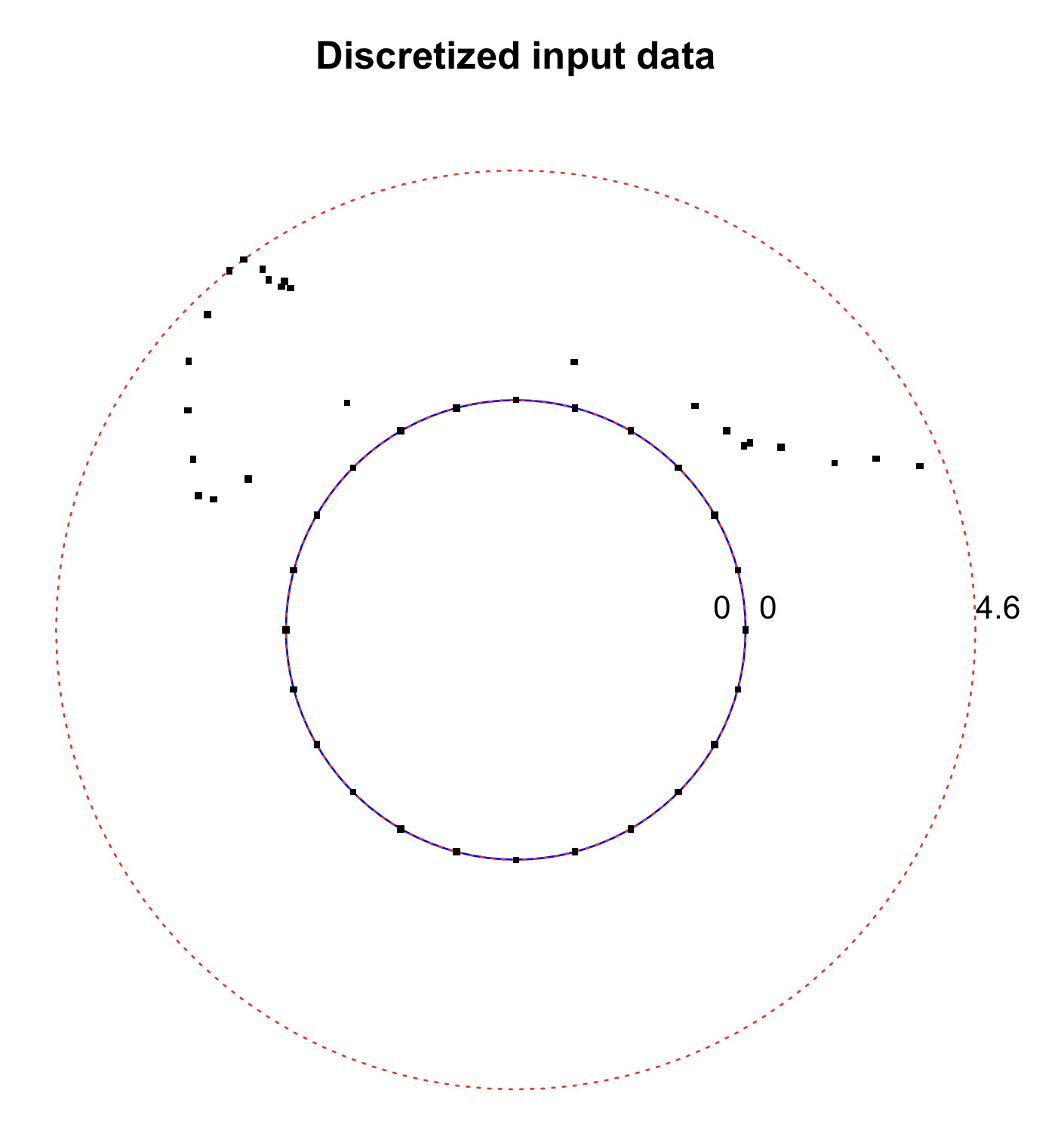}
\includegraphics[width=0.4\textwidth]{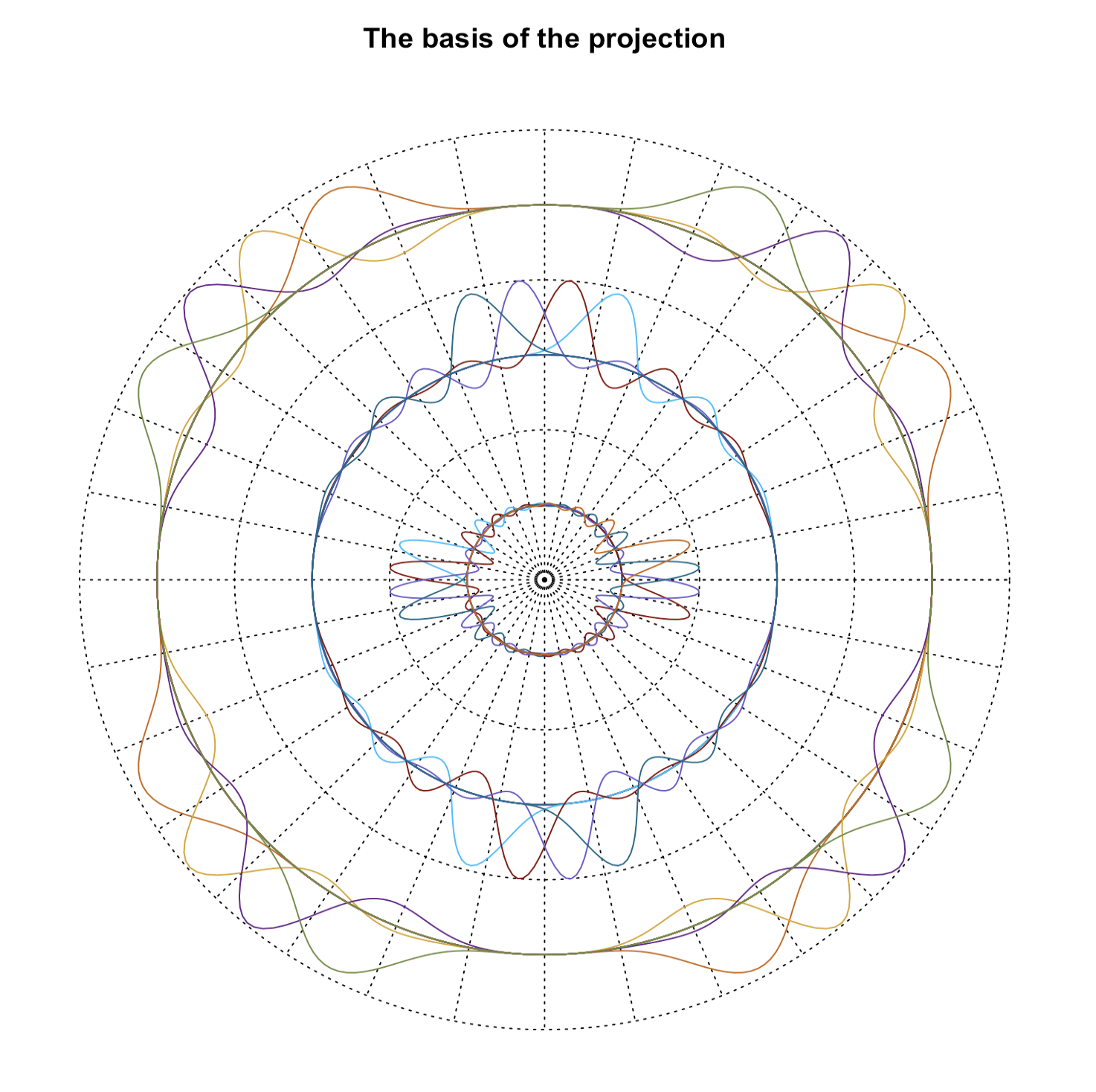}\\
\includegraphics[width=0.4\textwidth]{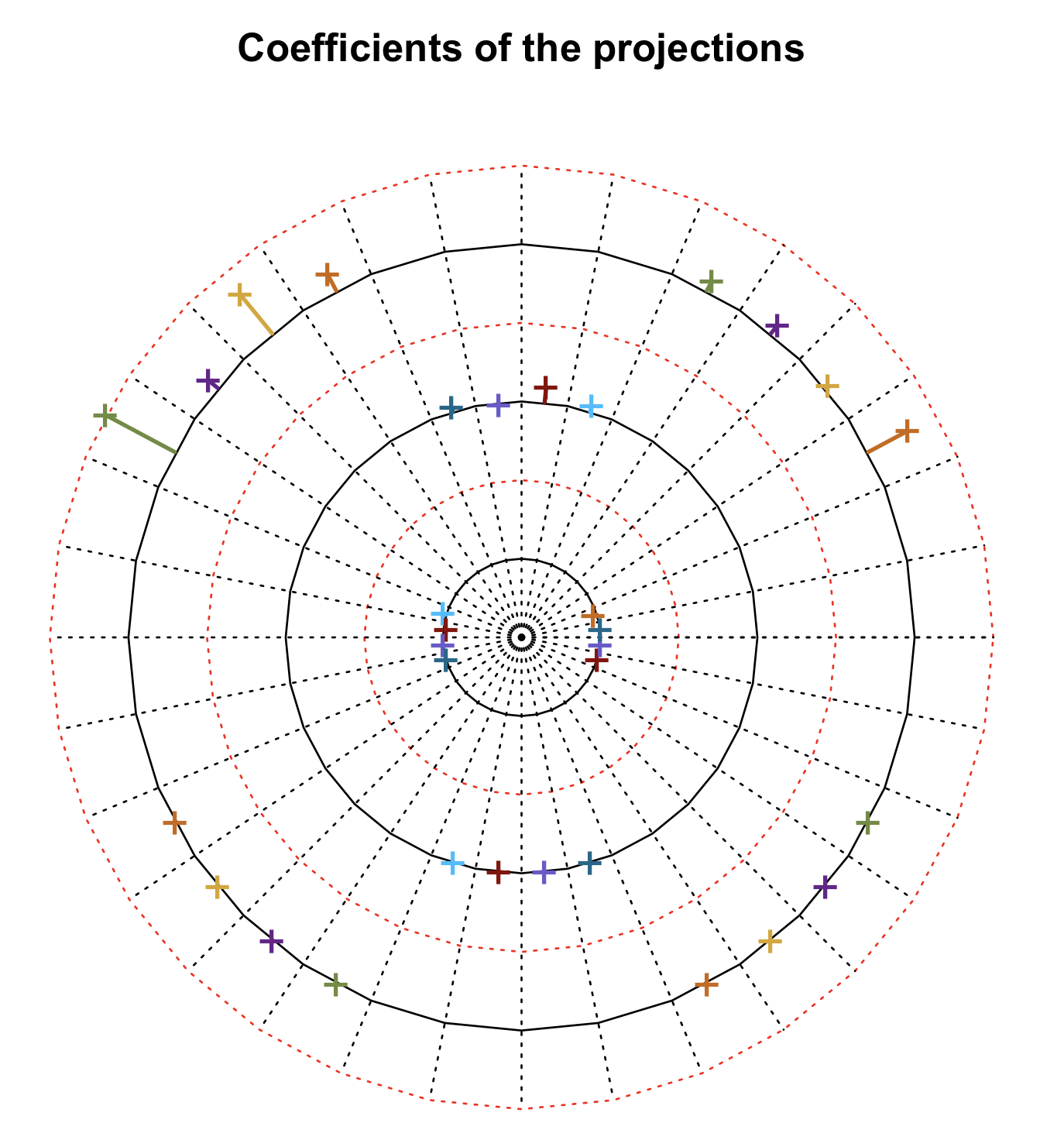}
\includegraphics[width=0.4\textwidth]{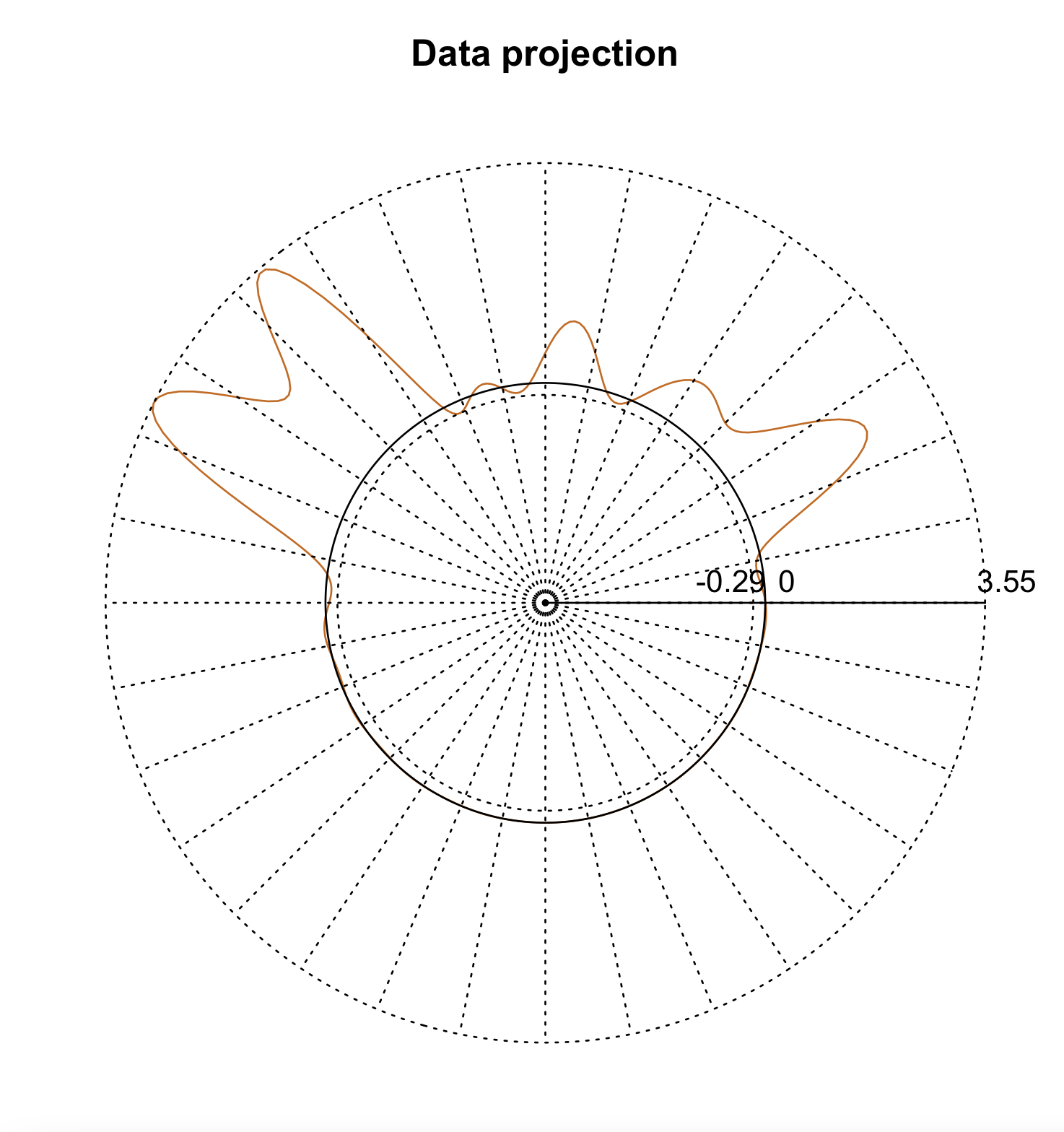}
  \caption{\small Transforming the first day 10[m] wind speed data to a function on a circle. {\it Left-Top:} Row scatter plot data in the polar coordinate representation of the {\it left-bottom} data in Figure~\ref{fig:raw1}; {\it Right-Top:} The splinet, of degree 4, used for the projection together with the knots marked by dashed lines; {\it Left-Bottom:} The coefficients of  the projection to the spline space; {\it Right-Bottom:}  The projection of the daily wind speed data $\hat v_1^{(10)}(\theta)$. }
  \label{fig:tran_speed}
  \end{figure}

Using a projection based on a splinet (function {\tt project()} in the {\tt Splinet} R-package) the daily data are transformed into continuous functions. In Figure~\ref{fig:tran_hist}, we see the components used to project the raw histogram data ({\it Left-Top}) to the smooth function ({\it Right-Bottom}).
The orthogonal spline basis, the splinet, that is used for the projection is presented in the {\it Right-Top} graph, while the corresponding coefficients of the projection are seen in the {\it Left-Bottom} figure. 
The splines are of the third order which can be seen from the graphs as the basis splines are grouped in triplets. 
We have considered the fully dyadic case corresponding to a 12-dimensional space of the third-order periodical splines that are presented the dyadic structure spanned over two levels.

To represent the wind speed in terms of the wind direction we project the scattered daily dataset to periodic splines using the same function {\tt project()}, for more details about this function see \cite{Podgorski}. The function returns a list  made of the four components:

\begin{description}
\item[onsp\$input]  – the original input data, which in the discrete data case are ordered with respect to the argument, if the original data were not, the spline input remains unchanged,
  \item[onsp\$coeff]  – the matrix of coefficients of the decomposition in the selected basis,
  \item[onsp\$basis]  – the Splinets-object representing the selected basis,
  \item[onsp\$sp]  – the Splinets-object representing the projection of the input in the projection spline space.
\end{description}

The raw data seen in Figure~\ref{fig:raw1}~{\it (Bottom)} need to be padded with zeros as the coverage of the interval $[0,1]$ is sparse. 
However, overall there are more data points than in histograms so that higher dimensional spline space is used.

Figure~\ref{fig:tran_speed} shows the results from the function {\tt project()}. 
The left-top figure presents scatter-plot of the first-day data at 10[m] in the polar coordinate format as described earlier. 
This is the same data as in Figure~\ref{fig:raw1} {(left-bottom)}. 
The remaining plots are analogous to the ones seen in Figure~\ref{fig:tran_hist}, except this time we use the dyadic structure with $N=3$ (three levels) and the fourth-order smoother splines as seen both in the representation of the splinet and its coefficients. 

The presented transformation of the data constitutes just an example of utilizing the periodic splines to prepare functional data. The actual analysis of the four-dimensional functional time series is not performed however the methods of the functional data as presented, for example, in \cite{horvatk} can be readily applied together with the tools implemented in the {\tt Splinets}-package.
Here, for the sake of promotion of the package, simple descriptive statistics aspects of the proposed functional representation of the data are obtained from the data by the means of the package.

\begin{figure}[t!]
  \centering
\includegraphics[width=0.4\textwidth]{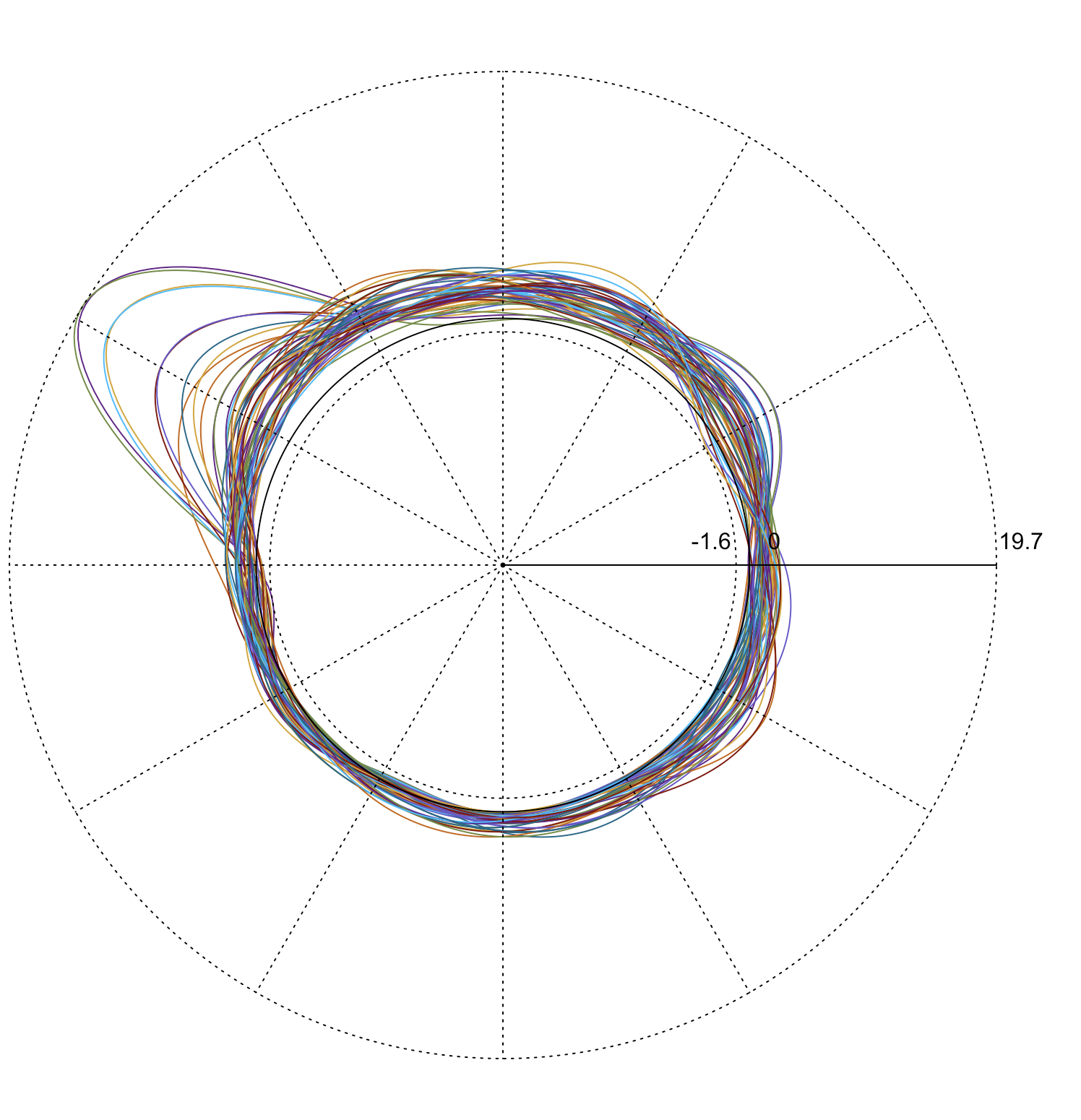}
\includegraphics[width=0.4\textwidth]{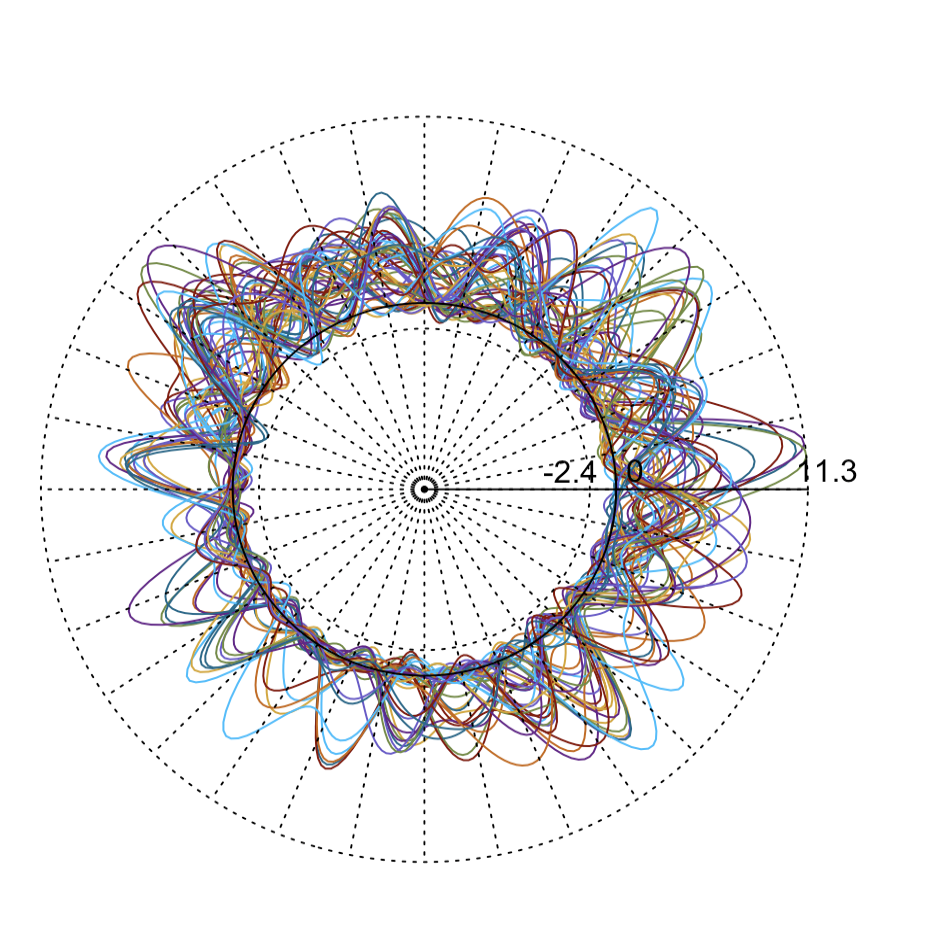}\\
\includegraphics[width=0.4\textwidth]{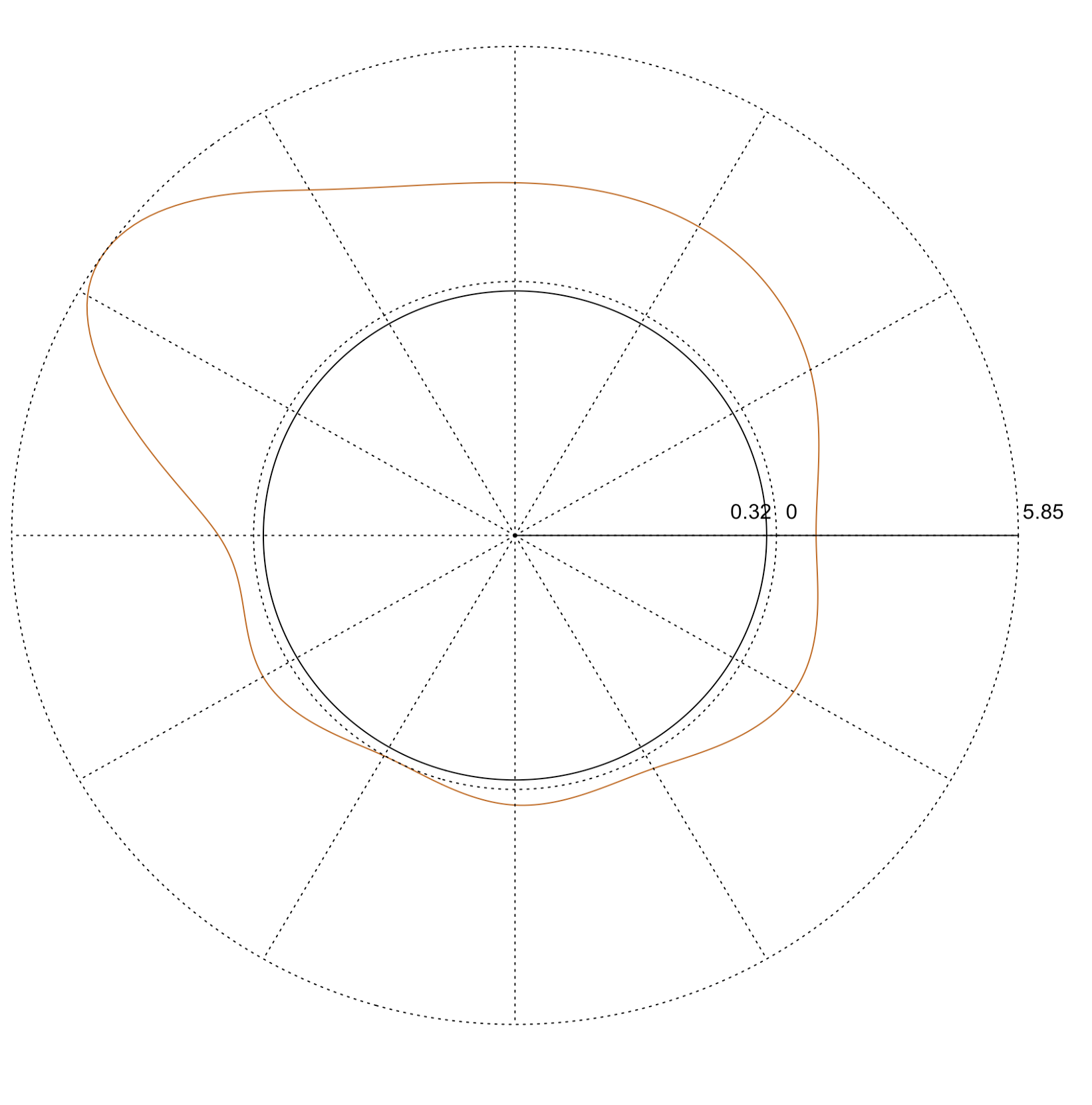}
\includegraphics[width=0.4\textwidth]{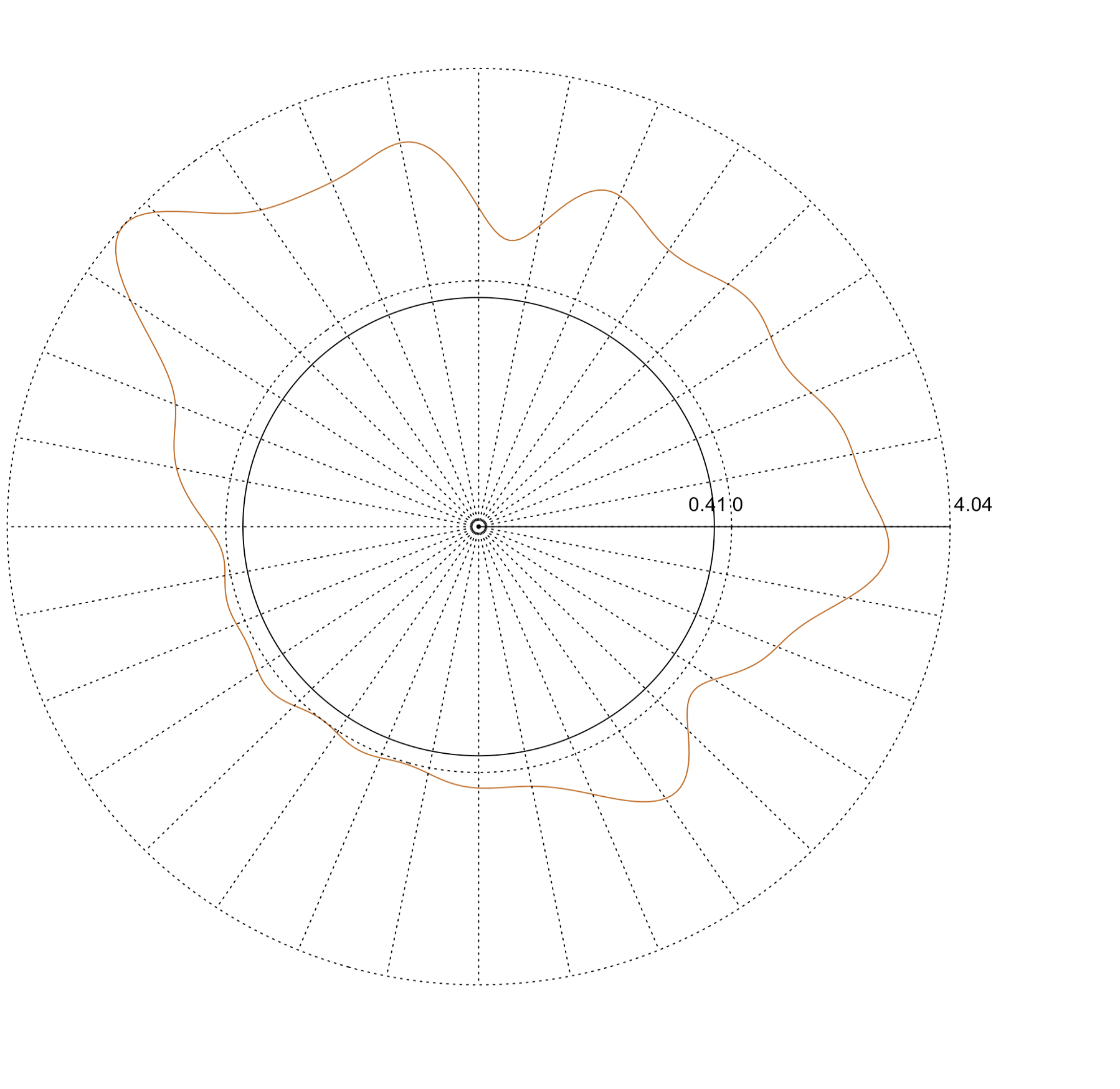}
  \caption{\small First look at the functional data corresponding to $10[m]$-height. {\it Left-Top:} 64 daily wind direction distributions; {\it Right-Top:} 64 daily dependence of the wind speed on the wind direction; {\it Left-Bottom:} The average of the daily wind-daily direction distributions; {\it Right-Bottom:}  The average wind-speed vs. the wind-direction. }
  \label{fig:all_days}
  \end{figure}
  
Only the functional daily data at $10[m]$-height are considered. The bivariate functional time series  $\left(\hat f^{(10)}_i(\theta),\hat v^{(10)}_i(\theta)\right)$, $i=1,\dots, 64$ are obtained from the data as described above. In the top two graphs, the functional data are plotted on the common graph, the left-hand-side graph corresponding to the distribution of the wind direction, and the right-hand-side graph showing the daily dependences of wind speed on a direction. 
One can observe that the dominant frequency of wind direction also has relatively strong winds. However, there are also strong winds at the directions around zero azimuth for which frequencies are moderate. 
This feature is even better illustrated on the bottom two graphs, where the means of the functional data are presented. In particular, for the azimuth between $180^\circ$ and $270^\circ$, we observe neither frequent winds nor strong ones.
Any further analysis of the data is beyond the purpose of this presentation.

\bibliography{references.bib}

%%%%%%%%

\end{document}